\documentclass[11pt, reqno]{amsart}
\usepackage{amsmath, amsthm, amscd, amsfonts, amssymb, graphicx, color}

\vfuzz2pt 
\hfuzz2pt 
\newtheorem{theorem}{Theorem}[section]
\newtheorem{cor}[theorem]{Corollary}
\newtheorem{lem}[theorem]{Lemma}

\theoremstyle{definition}
\newtheorem{defn}[theorem]{Definition}
\newtheorem{exam}[theorem]{Example}

\numberwithin{equation}{section}

\newcommand{\A}{\mathcal{A}}

\newcommand{\LL}{\mathcal{L}}

\newcommand{\B}{\mathcal{B}}



\begin{document}
\title[]{Ultrafilters on measurable semigroups}
\author[]{A. Pashapournia\\
M. Akbari Tootkaboni$^\dag$\\
D. Ebrahimbagha}
\address{A. Pashapournia}
\address{Department of Mathematics\\
Faculty of Sciences\\
Islamiic Azad University\\
 Central Tehran Branch.}
\address{M. Akbari Tootkaboni}
\address{Department of pure Mathematics\\
Faculty of Mathematical Sciences\\
University of Guilan\\
Rasht-Iran.}
\address{ E-mail: tootkaboni@guilan.ac.ir}
\address{D. Ebrahimbagha}
\address{Department of Mathematics\\
Faculty of Sciences\\
Islamiic Azad University\\
 Central Tehran Branch.}
\address{E-mail: e-bagha@yahoo.com}
\keywords{Copmpactification, Measurable Space, Ultrafilter, Semigroup. \\
 2010 Mathematics Subject Classification:  05D10; Secondary 22A15.\\
 $^\dag$Corresponding Arthur }
\begin{abstract}
Let $(S,\cdot)$ be a semigroup and $\mathfrak{m}$ be a $\sigma$-algebra on $S$. We say $(S,\cdot,\mathfrak{m})$ is a measurable semigroup if $\pi:S\times S\longrightarrow S$ by $\pi(x,y)=x\cdot y$ is a measurable function.
In this paper , we consider to $\mathfrak{m}^\beta$ as the collection of all ultrafilters on $\mathfrak{m}$.
 We show that $\mathfrak{m}^\beta$ is a compact right topological semigroup respect to generated topology by $\sigma$-algebra $\mathfrak{m}$ on $\mathfrak{m}^\beta$. Also we study some elementary properties of the algebraic structure of $\mathfrak{m}^\beta$.
\end{abstract}

\maketitle
\section{\textbf{Introduction}}

A semigroup $S$ which is
also a  Hausdorff topological space is called a semitopological semigroup, if  for each $s\in S$, $\lambda_{s} :S\rightarrow S$ and $r_{s} :S\rightarrow S$ are
continuous, where for each $x\in S$, $\lambda_{s}(x)=sx$ and
$r_{s}(x)=xs$. If  just $r_s$, for each $s\in S$, is continuous, $ S$ is called a right topological semigroup.

A pair $(\psi,X)$ is a semigroup compactification of $S$ if $X$ is a Hausdorff compact right topological
semigroup and $\psi:S\rightarrow X$ is  continuous homomorphism with dense image
such that for all $s\in S$, the mapping
$x\mapsto\psi(s)x:X\rightarrow X$ is continuous. For more details
see \cite[Section 2]{Analyson}.

All semigroup compactifications of a semitopological semigroup as a
collection of $z$-ultrafilters or $e$-ultrafilters have been described, see \cite{Alaste1}, \cite{Alaste2}, \cite{Akbarii} and \cite{Akbari} for more details and some applications. This approach gives us a new light on studying this
kind of compactifications. It seems that the methods presented in
\cite{Alaste1}, \cite{Alaste2}, \cite{Akbarii} and
 \cite{Akbari} can serve as a valuable tool in the study of semigroup compactifications and also of topological compactifications. But this methods depend on concepts of topology on semigroups. For a clear example, the Stone-$\check{C}$ech compactification of real additive numbers equipped to the natural topology is not semigroup compactification.

Let $(S,\cdot)$ be a semitopological semigroup. In Preliminary for a nonempty set $X$, we define ultrafiler on a $\sigma$-algebra $\mathfrak{m}$, is called $\mathfrak{m}$-ultrafilter. Also, we study $\mathfrak{m}^\beta$ as a collection of all $\mathfrak{m}$-ultrafilters on $\mathfrak{m}$ respect to $\sigma$-algebra generated  by $\{\widehat{A}:A\in\mathfrak{m}\}$, where $\widehat{A}=\{p\in\mathfrak{m}^\beta:A\in p\}$.

In Section 3, for two measurable spaces $(X,\mathfrak{m})$ and $(Y,\mathfrak{n})$ we show that if $f:(X,\mathfrak{m})\rightarrow(Y,\mathfrak{n})$ is measurable function then there exists a unique measurable extension of $f$.

In Section 4, we define measurable semigroup and we extend $"\cdot"$ naturally to $"*"$ on $\mathfrak{m}^\beta$. Also, some elementary algebraic properties of $(\mathfrak{m}^\beta,*)$ as extension of measurable semigroup $(S,\cdot,\mathfrak{m})$ is stated.

In Section 5, we concentrate on the Lebesgue measurable subsets od $S=(0,+\infty)$.

\section{\textbf{Preliminary}}

Let $X$ be a nonempty set and $\mathfrak{m}$ be an infinite $\sigma$-algebra on $X$. Then $(X,\mathfrak{m})$ is called measurable space. We say that $\mathfrak{m}$ separate $X$ if for each $x,y\in X$ there exist $A,B\in\mathfrak{m}$ such that $x\in A$, $y\in B$ and $A\cap B=\emptyset$.
If $\mu$ is a measure on $\mathfrak{m}$, then $(X,\mathfrak{m}, \mu)$ is a measure space. The outer measure $\mu^*$ can be defined
 for every nonnegative measure $\mu$. The collection of $\mu^*-$measurable sets is denoted by $\mathfrak{m}_{\mu}$. Also $\mu^*$ is a measure on
$\mathfrak{m}_{\mu}$,  $(X,\mathfrak{m}_{\mu}, \mu^*)$ is a Lebesgue extension or the Lebesgue completion of the  measure $\mu$, and $\mathfrak{m}\subseteq\mathfrak{m}_{\mu}$, see Theorem 1.5.6 in \cite{Bogachev}.

 Let $(X,\mathfrak{m}_1)$ and $(X,\mathfrak{m}_2)$ be measurable spaces. The function $ f: X\rightarrow Y$ is an
$(\mathfrak{m}_1,\mathfrak{m}_2)$-measurable function if  $f^{-1}(A)\in \mathfrak{m}_1$ for every $A\in \mathfrak{m}_2$.

Let $(Y, \tau)$ be a topological space. The $\sigma$-algebra generated by $\tau$ is called the Borel $\sigma$-algebra and is denoted by $B_\tau$.
The function  $f: (X,\mathfrak{m})\rightarrow (Y,\tau)$ is an
$\mathfrak{m}$-measurable function if  $f^{-1}(A)\in \mathfrak{m}$ for every $A\in \tau$.

Let $(X,\A)$ and $(Y,\B)$ be two measurable spaces. For $X\times Y$, we consider the collection of all sets of the from $A\times B$ , where $A\in \A$ and $B\in \B$, called measurable rectangle. The $\sigma$-algebra generated by all measurable rectangles is called product of the $\sigma$-algebras $\A $ and $\B $, and is denoted by $\A\bigotimes \B$. Let $ \mu $ and $\nu $ be measures on $\A $ and $\B $, respectively. Define $\mu \times \nu (A\times B) =\mu(A) \nu(B) $ for each measurable rectangles $A\times B$.

\begin{theorem}
 The set function $\mu\times \nu $ extends to a countably additive measure, denoted by $\mu\bigotimes \nu $, on $\A\bigotimes \B $.
 \end{theorem}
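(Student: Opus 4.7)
The plan is to apply the Carathéodory extension theorem to the premeasure $\mu\times\nu$. First I would verify that the family $\mathcal{R}$ of measurable rectangles forms a semi-algebra: it is closed under intersection, since $(A\times B)\cap (A'\times B')=(A\cap A')\times (B\cap B')$, and the complement of $A\times B$ decomposes as the disjoint union $(A^c\times Y)\sqcup (A\times B^c)$ of two rectangles. Consequently the algebra $\mathcal{R}_0$ generated by $\mathcal{R}$ consists of finite disjoint unions of rectangles, and one defines $\mu\times\nu$ on $\mathcal{R}_0$ by summing $\mu(A_i)\,\nu(B_i)$ over any such decomposition. A routine check, via the common refinement of two decompositions, shows that this value does not depend on the chosen decomposition, so $\mu\times\nu$ is a well-defined, finitely additive set function on $\mathcal{R}_0$.

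The key step is to upgrade finite additivity to countable additivity on $\mathcal{R}_0$. The crucial lemma is: if $A\times B=\bigsqcup_{n=1}^\infty A_n\times B_n$ is a countable disjoint union with $A,A_n\in\A$ and $B,B_n\in\B$, then
\[
\mu(A)\,\nu(B)=\sum_{n=1}^\infty \mu(A_n)\,\nu(B_n).
\]
This follows by pointwise summation of characteristic functions: for every $(x,y)\in X\times Y$, $\chi_A(x)\chi_B(y)=\sum_n \chi_{A_n}(x)\chi_{B_n}(y)$. Fixing $x$ and integrating in $y$ against $\nu$, monotone convergence (applied to the nonnegative partial sums) yields $\chi_A(x)\,\nu(B)=\sum_n \chi_{A_n}(x)\,\nu(B_n)$. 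Integrating in $x$ against $\mu$ and applying monotone convergence a second time produces the claimed identity. Extending from disjoint unions inside a single rectangle to disjoint unions of elements of $\mathcal{R}_0$ is then a bookkeeping exercise using finite additivity and the rectangle case.

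Once $\mu\times\nu$ is known to be countably additive on the algebra $\mathcal{R}_0$, the Carathéodory extension theorem produces a countably additive measure on the $\sigma$-algebra generated by $\mathcal{R}_0$, which is $\A\bigotimes\B$ by definition; this is the measure to be named $\mu\bigotimes\nu$. By construction $\mu\bigotimes\nu(A\times B)=\mu(A)\,\nu(B)$ on all measurable rectangles.

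The principal obstacle is the two-fold application of monotone convergence in the countable-additivity lemma on rectangles; every other step is a formal manipulation of semi-algebras and finite disjoint decompositions, and the Carathéodory extension theorem may be invoked as a black box. I should also note that the theorem as stated claims only existence, not uniqueness, so no $\sigma$-finiteness hypothesis on $\mu$ or $\nu$ is required for the argument.
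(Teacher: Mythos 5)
Your proposal is correct and complete: the semi-algebra decomposition of $(A\times B)^c$, the two-fold monotone convergence argument on characteristic functions for countable additivity on rectangles, and the Carath\'eodory extension are exactly the standard construction, and your remark that existence (unlike uniqueness) needs no $\sigma$-finiteness is accurate. The paper offers no proof of its own but simply cites Theorem 3.3.1 of Bogachev, whose argument is essentially the one you give, so there is nothing to compare beyond noting the match.
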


Proof: See Theorem 3.3.1 in \cite{Bogachev}.
\begin{theorem}
Let $(X_1,\mathcal{A}_1)$ and $(X_2,\mathcal{A}_2)$ be measurable spaces. Then the set
$A_{x_1}=\{x_2\in X_2:(x_1,x_2)\in A\}\in \mathcal{A}_2$ for every $A\in\mathcal{A}_1\otimes\mathcal{A}_2$ and $x_1\in X_1$.
Also the set
$A^{x_2}=\{x_1\in X_1:(x_1,x_2)\in A\}\in \mathcal{A}_1$ for every $A\in\mathcal{A}_1\otimes\mathcal{A}_2$ and $x_2\in X_2$.
\end{theorem}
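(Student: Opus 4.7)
The plan is to use the standard good-sets principle (also known as the principle of appropriate sets or a $\sigma$-algebra / monotone class argument). Define
\[
\D = \{A \in \A_1 \otimes \A_2 : A_{x_1} \in \A_2 \text{ for every } x_1 \in X_1\}.
\]
The goal is to show $\D = \A_1 \otimes \A_2$, and by symmetry the statement for $A^{x_2}$ will follow from exactly the same argument with the roles of coordinates swapped.

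First I would verify that $\D$ contains every measurable rectangle $A \times B$ with $A \in \A_1$ and $B \in \A_2$. Indeed, $(A \times B)_{x_1} = B$ if $x_1 \in A$ and $(A \times B)_{x_1} = \emptyset$ if $x_1 \notin A$; in either case the section lies in $\A_2$. Hence $\D$ contains the generating family of $\A_1 \otimes \A_2$.

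Next I would show that $\D$ is itself a $\sigma$-algebra of subsets of $X_1 \times X_2$. This reduces to three elementary compatibility facts about sections, each holding for every fixed $x_1$: $(X_1 \times X_2)_{x_1} = X_2$; $(A^c)_{x_1} = (A_{x_1})^c$; and $\bigl(\bigcup_n A_n\bigr)_{x_1} = \bigcup_n (A_n)_{x_1}$. These imply that $\D$ contains $X_1 \times X_2$ and is closed under complements and countable unions, using closure of $\A_2$ under the corresponding operations.

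Since $\D$ is a $\sigma$-algebra containing all measurable rectangles and $\A_1 \otimes \A_2$ is by definition the smallest such $\sigma$-algebra, we obtain $\D \supseteq \A_1 \otimes \A_2$, and the reverse inclusion is trivial. There is no real obstacle here; the only point to watch is to keep the variable $x_1$ fixed throughout the verification, because the definition of $\D$ requires the section condition to hold for \emph{every} $x_1$, so the section operations above must be applied pointwise in $x_1$. The statement for $A^{x_2}$ is proved identically with $\D' = \{A \in \A_1 \otimes \A_2 : A^{x_2} \in \A_1 \text{ for every } x_2 \in X_2\}$.
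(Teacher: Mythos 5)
Your proposal is correct and is exactly the standard good-sets argument; the paper itself gives no proof, deferring to Proposition 3.3.2 in Bogachev, whose proof is this same computation (sections of rectangles are measurable, and the class of sets with all sections measurable is a $\sigma$-algebra containing the rectangles). Nothing is missing.
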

\begin{proof}
See Proposition 3.3.2 in \cite{Bogachev}.
\end{proof}

\begin{defn}
Let $X$ be a nonempty set and $\mathfrak{m}$ be an infinite $\sigma$-algebra on $X$. We say $p\subseteq \mathfrak{m}$ is an $\mathfrak{m}$-filter if

$(a)$ $\emptyset\notin p$ and $X\in p$.

$(b)$ If $A,B\in  \mathfrak{m}$, $A\in p$, and $A\subseteq B$, then $B\in  p$.

$(c)$ If $A,B\in p$, then $A\cap B\in p$.
\end{defn}

An $\mathfrak{m}$-ultrafilter is an $\mathfrak{m}$-filter which is not properly contained in
any other $\mathfrak{m}$-filter. Zorn’s Lemma guarantees that there exist $\mathfrak{m}^\beta$-ultrafilters.
\begin{lem}\label{L1}
 Let $p$ be an $\mathfrak{m}$-filter and $A\in \mathfrak{m}$. Either

$(a)$ there is some $B\in p$ such that $A\cap B=\emptyset$ or

$(b)$ $\{ C\in\mathfrak{m}:\ there \ is \ some\ B\in p \ with\ A\cap B\subseteq C\}$ is an $\mathfrak{m}$-filter.
\end{lem}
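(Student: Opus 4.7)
The plan is to show that (a) and (b) form a genuine dichotomy by assuming (a) fails and deducing (b). Explicitly, suppose that for every $B\in p$ one has $A\cap B\neq\emptyset$; under this hypothesis I will verify that the family
\[
\mathcal{D}=\{C\in\mathfrak{m}:\text{there is }B\in p\text{ with }A\cap B\subseteq C\}
\]
satisfies the three axioms (a), (b), (c) of an $\mathfrak{m}$-filter from the preceding definition.

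First I would handle axiom (a) of the definition. Membership $X\in\mathcal{D}$ is immediate by taking $B=X\in p$. To see that $\emptyset\notin\mathcal{D}$, observe that $\emptyset\in\mathcal{D}$ would mean there exists $B\in p$ with $A\cap B\subseteq\emptyset$, i.e.\ $A\cap B=\emptyset$, which is exactly the situation we ruled out by negating case (a) of the lemma. This step is the only one that uses the assumption that (a) fails; it is also the reason the dichotomy is phrased with an ``either/or'' rather than as a conjunction.

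Next I would verify upward closure (axiom (b)). If $C\in\mathcal{D}$ is witnessed by $B\in p$ with $A\cap B\subseteq C$, and $C\subseteq C'$ with $C'\in\mathfrak{m}$, then the same $B$ witnesses $C'\in\mathcal{D}$, since $A\cap B\subseteq C\subseteq C'$.

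Finally I would verify closure under finite intersection (axiom (c)). Given $C_1,C_2\in\mathcal{D}$ with witnesses $B_1,B_2\in p$ satisfying $A\cap B_i\subseteq C_i$, the intersection $B_1\cap B_2$ lies in $p$ by axiom (c) for $p$, and
\[
A\cap(B_1\cap B_2)=(A\cap B_1)\cap(A\cap B_2)\subseteq C_1\cap C_2.
\]
Since $\mathfrak{m}$ is a $\sigma$-algebra, $C_1\cap C_2\in\mathfrak{m}$, so $C_1\cap C_2\in\mathcal{D}$. I do not foresee a serious obstacle here; the only subtle point is recognizing that the negation of (a) is tailor-made to force $\emptyset\notin\mathcal{D}$, and that the witnesses for pairs of elements of $\mathcal{D}$ can always be combined by intersecting inside $p$.
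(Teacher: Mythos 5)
Your proposal is correct and follows essentially the same route as the paper: negate alternative $(a)$, then verify the three filter axioms directly, using the failure of $(a)$ only to rule out $\emptyset$ and combining witnesses by intersecting them inside $p$. Nothing to add.
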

\begin{proof}
If there is $(a)$, then the statement is obvious.

If $(a)$ is false, then $A\cap B\neq \emptyset$ for every $B\in p$. Let
\[
\mathcal{A}=\{C\in \mathfrak{m}:\ there \ is \ some\ B\in p \ with\ A\cap B\subseteq C\}.
\]
It is obvious that $A\in\mathcal{A}$, $\emptyset\notin \mathcal{A}$, and $X\in \mathcal{A}$. If $C\in \mathcal{A}$ and $C\subseteq C'\in\mathfrak{m}$, then $C'\in \mathcal{A}$. Also if $C,D\in\mathcal{A}$, then there exist $B_1, B_2\in p$ such that $B_1\cap A\subseteq C$ and $B_2\cap A\subseteq D$. So
$(B_1\cap B_2)\cap A\subseteq D\cap C$ and hence $B_1\cap B_2\in p$ Since $p$ is an $\mathfrak{m}$-filter. So $D\cap C\in\mathcal{A}$ and this implies
that $\mathcal{A}$ is an $\mathfrak{m}$-filter.
\end{proof}
\begin{theorem}\label{T2}
Let $\mathfrak{m}$ be an infinite $\sigma$-algebra on $X$ and $p\subseteq\mathfrak{m}$ be an $\mathfrak{m}$-filter. The following statements are
equivalent:

$(a)$ $p$ is an $\mathfrak{m}$-ultrafilter.

$(b)$ for all $A,B\in \mathfrak{m}$, if $A\cup B\in p$ then $ A\in p$ or $B\in p$.

$(c)$ for all $A\in \mathfrak{m}$ either $ A\in p$ or $A^c\in p$.

 $(d)$ for each $A\in \mathfrak{m}\setminus p$ there is some $B
\in p$ such that $A\cap B=\emptyset$.
\end{theorem}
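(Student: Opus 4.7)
The plan is to prove the standard cycle of implications $(a)\Rightarrow(d)\Rightarrow(c)\Rightarrow(b)\Rightarrow(a)$, leaning on Lemma \ref{L1} for the first step and on the fact that $\mathfrak{m}$ is closed under complements for the remaining steps.

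First, for $(a)\Rightarrow(d)$, I would take $A\in\mathfrak{m}\setminus p$ and apply Lemma \ref{L1}. Option $(a)$ of that lemma is exactly what $(d)$ claims, so I may assume option $(b)$ holds, giving an $\mathfrak{m}$-filter $\mathcal{A}=\{C\in\mathfrak{m}:\exists\, B\in p,\ A\cap B\subseteq C\}$. Clearly $p\subseteq\mathcal{A}$ (take $B=C$) and $A\in\mathcal{A}$ (take $B=X$), while $A\notin p$ by hypothesis; thus $\mathcal{A}$ properly contains $p$, contradicting maximality of the $\mathfrak{m}$-ultrafilter $p$.

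Next, $(d)\Rightarrow(c)$ is immediate: given $A\in\mathfrak{m}$, if $A\notin p$ then $(d)$ supplies $B\in p$ with $A\cap B=\emptyset$, hence $B\subseteq A^c$, and upward-closure of the filter forces $A^c\in p$. For $(c)\Rightarrow(b)$, suppose $A\cup B\in p$ with $A\notin p$. By $(c)$, $A^c\in p$, so $(A\cup B)\cap A^c\in p$ by finite intersection, and this set equals $B\cap A^c\subseteq B$, so $B\in p$ by upward-closure.

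Finally, for $(b)\Rightarrow(a)$, I would argue by contradiction: if some $\mathfrak{m}$-filter $q$ properly contains $p$, pick $A\in q\setminus p$. Since $X=A\cup A^c\in p$, hypothesis $(b)$ gives $A\in p$ or $A^c\in p$; the former is excluded, so $A^c\in p\subseteq q$. But then $A,A^c\in q$ force $\emptyset=A\cap A^c\in q$, contradicting the definition of an $\mathfrak{m}$-filter.

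I do not expect any real obstacle here; the only place where one must be slightly careful is $(a)\Rightarrow(d)$, where the key point is to verify that $\mathcal{A}$ strictly contains $p$ (using $A\in\mathcal{A}\setminus p$) in order to invoke maximality. Everywhere else the argument is essentially the same as for ordinary ultrafilters on sets, transported verbatim to the $\sigma$-algebra setting, since all complements, finite intersections, and finite unions used remain inside $\mathfrak{m}$.
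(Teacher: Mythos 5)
Your proof is correct; every implication in the cycle $(a)\Rightarrow(d)\Rightarrow(c)\Rightarrow(b)\Rightarrow(a)$ checks out, and the key step $(a)\Rightarrow(d)$ correctly verifies that the filter $\mathcal{A}$ produced by Lemma \ref{L1} strictly contains $p$ (via $A\in\mathcal{A}\setminus p$) before invoking maximality.

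You traverse the cycle in the opposite direction from the paper, which proves $(a)\Rightarrow(b)\Rightarrow(c)\Rightarrow(d)\Rightarrow(a)$, and this reorganizes where the work is done. The paper's hard step is $(a)\Rightarrow(b)$: it invokes Lemma \ref{L1} twice, once for $A\notin p$ and once for $B\notin p$, to get two sets $C,D\in p$ disjoint from $A$ and $B$ respectively, and then intersects; your hard step is $(a)\Rightarrow(d)$, which uses Lemma \ref{L1} exactly once and is the cleaner, more direct use of that lemma (indeed, the paper's $(a)\Rightarrow(b)$ is really your $(a)\Rightarrow(d)$ applied twice in disguise). The other notable difference is at the step that re-establishes maximality: the paper's $(d)\Rightarrow(a)$ takes an $\mathfrak{m}$-ultrafilter $\mathcal{A}\supseteq p$, which implicitly leans on the Zorn's Lemma fact that every filter extends to an ultrafilter, whereas your $(b)\Rightarrow(a)$ works directly with an arbitrary $\mathfrak{m}$-filter $q$ properly containing $p$ and derives $\emptyset\in q$ from the filter axioms alone. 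Both arguments rest on the same two ingredients --- Lemma \ref{L1} for the maximality step and closure of $\mathfrak{m}$ under complements for the rest --- so the difference is one of economy rather than substance, with your version being marginally more self-contained.
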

\begin{proof}
(a) implies (b). Let $A, B\in \mathfrak{m}$ such that $A\cup B\in p$. If $B\notin p$ and $A\notin p$, then there are
$C, D\in p$ such that $A\cap C=\emptyset$ and by Lemma \ref{L1}, $B\cap D=\emptyset$. So
\begin{align*}
(A\cup B)\cap (D\cap C) &= (A\cap(D\cap C))\cup (B\cap(D\cap C))\\
& \subseteq (A\cap C)\cup(B\cap D)=\emptyset. \\
\end{align*}

Then $\emptyset\in p$, is contradiction.

(b) implies (c). It is obvious.

(c) implies (d). Let $A\in\mathfrak{m}$ such that $A\notin p$. Then $A^c\in p$ so $A\cap A^c=\emptyset$.

(d) implies (a). Let $p$ be an $\mathfrak{m}$-filter and  let $\mathcal{A}$ be an $\mathfrak{m}$-ultrafilter such that $p\subseteq\mathcal{A}$.
 Pick  $A\in \mathcal{A}\setminus p$. Then $A\notin p$ so there is a some $B\in p$ such that $A\cap B=\emptyset$.
 But $\emptyset=A\cap B\in \mathcal{A}$, a contradiction.
\end{proof}
\begin{defn}
Let $X$ be a nonempty set and $\mathfrak{m}$ be an infinite $\sigma$-algebra on $X$. For $x\in X$, we define
\[
\hat{x}=\{A\in \mathfrak{m}:x\in A\}.
\]
\end{defn}
and $\hat{x}$ is called $\mathfrak{m}$-principal ultrafilter.

 It is obvious that $p$ is the $\mathfrak{m}$-principal ultrafilter if and only if $\bigcap p\neq\emptyset$.
\begin{theorem}\label{T1}
Let $X$ be a nonempty set,  $\mathfrak{m}$ be an infinite $\sigma$-algebra on $X$, and $\mathcal{A}\subseteq\mathfrak{m}$ has the finite
intersection property. Then there is an $\mathfrak{m}$-ultrafilter $p$ such that $\mathcal{A}\subseteq p$.
\end{theorem}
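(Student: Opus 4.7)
The plan is to first enlarge $\mathcal{A}$ to an $\mathfrak{m}$-filter $\mathcal{F}_0$ in the obvious way, and then invoke Zorn's lemma on the partially ordered set of $\mathfrak{m}$-filters containing $\mathcal{F}_0$, exactly as one does in the classical Boolean setting; the only feature to watch is that every construction must stay inside $\mathfrak{m}$, which it does automatically since $\mathfrak{m}$ is closed under finite intersections and supersets of sets in $\mathfrak{m}$ need not be considered outside $\mathfrak{m}$.

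First I would define
\[
\mathcal{F}_0=\bigl\{\,B\in\mathfrak{m}:\ A_1\cap\cdots\cap A_n\subseteq B \text{ for some }n\geq 1\text{ and }A_1,\dots,A_n\in\mathcal{A}\,\bigr\}.
\]
A direct verification shows $\mathcal{F}_0$ satisfies conditions $(a)$--$(c)$ of the definition of an $\mathfrak{m}$-filter: the finite intersection property of $\mathcal{A}$ guarantees that $\emptyset\notin\mathcal{F}_0$, while $X\in\mathcal{F}_0$ is trivial; the superset condition is built into the definition; and for two elements $B_1,B_2\in\mathcal{F}_0$, witnessed respectively by finite sub-intersections of $\mathcal{A}$, the intersection of those two witnesses is still a finite sub-intersection of $\mathcal{A}$ contained in $B_1\cap B_2$. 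Clearly $\mathcal{A}\subseteq\mathcal{F}_0$.

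Next I would consider $\mathbb{P}=\{\,q\subseteq\mathfrak{m}:\ q \text{ is an }\mathfrak{m}\text{-filter with }\mathcal{F}_0\subseteq q\,\}$, ordered by inclusion. Given a chain $\mathcal{C}\subseteq\mathbb{P}$, I would check that $q^{*}=\bigcup\mathcal{C}$ is again in $\mathbb{P}$: conditions $(a)$ and $(b)$ pass to the union immediately, and for $(c)$, if $A,B\in q^{*}$, then $A\in q_1$ and $B\in q_2$ for some $q_1,q_2\in\mathcal{C}$, and by the chain property we may assume $q_1\subseteq q_2$, so $A,B\in q_2$ and hence $A\cap B\in q_2\subseteq q^{*}$. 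Zorn's lemma then yields a maximal element $p\in\mathbb{P}$. By maximality $p$ is an $\mathfrak{m}$-ultrafilter (any strictly larger $\mathfrak{m}$-filter would still contain $\mathcal{F}_0$, contradicting maximality), and $\mathcal{A}\subseteq\mathcal{F}_0\subseteq p$.

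There is no real obstacle; the only delicate point is the verification that the union of a chain of filters is closed under binary intersections, which is exactly where the nestedness of the chain is used. Everything else is a routine transcription of the standard ultrafilter extension argument into the $\mathfrak{m}$-filter framework introduced earlier in the section.
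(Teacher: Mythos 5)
Your proof is correct and is exactly the argument the paper has in mind: the paper's own proof consists solely of the remark that the claim follows from Zorn's Lemma, and your write-up supplies the standard details (upward closure of $\mathcal{A}$ to a filter, union of a chain of filters is a filter, maximal elements are ultrafilters). The only cosmetic point is that your definition of $\mathcal{F}_0$ implicitly assumes $\mathcal{A}\neq\emptyset$; if $\mathcal{A}=\emptyset$ one simply takes $\mathcal{F}_0=\{X\}$ and the rest goes through unchanged.
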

\begin{proof}
 By Zorn's Lemma is obvious.
\end{proof}
\begin{defn}
Let $X$ be a nonempty set, let $\mathfrak{m}$ be an infinite $\sigma$-algebra on $X$, and let $\mathcal{R}$ be a nonempty set such that $\mathcal{R}\subseteq\mathfrak{m}$. We say that $\mathcal{R}$ is partition regular
if and only if whenever $A,B\in \mathfrak{m}$ and $A\cup B\in \mathcal{R}$, there exists $C\in \mathcal{R} $
such that $C\subseteq A$ or $C\subseteq B$.
\end{defn}
\begin{theorem}
Let $X$ be a nonempty set, let $\mathfrak{m}$ be an infinite $\sigma$-algebra on $X$, and let $\mathcal{R}$ be a nonempty set such that $\mathcal{R}\subseteq\mathfrak{m}$ and assume that $\emptyset\notin \mathcal{R}$.
Let $R^{\uparrow}=\{B\in \mathfrak{m}:A\subseteq B\mbox{ for some } A\in\mathcal{R}\}$.
The following statements
are equivalent:

$(a)$ $\mathcal{R}$ is partition regular.

$(b)$ Whenever $\mathcal{A}\subseteq\mathfrak{m}$ has the property that every finite nonempty subfamily of
$\mathcal{A}$ has an intersection which is in $R^{\uparrow}$, there is an $\mathfrak{m}$-ultrafilter $p\subseteq\mathfrak{m}$ such that
$\mathcal{A}\subseteq p\subseteq R^{\uparrow}$.

$(c)$ Whenever $A\in\mathcal{R}$, there is some $\mathfrak{m}$-ultrafilter $p$ such that $A\in p\subseteq R^{\uparrow}$.
\end{theorem}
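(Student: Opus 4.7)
The plan is to prove the cyclic chain (a)$\Rightarrow$(b)$\Rightarrow$(c)$\Rightarrow$(a). The implications (b)$\Rightarrow$(c) and (c)$\Rightarrow$(a) are light and I would dispose of them quickly: for (b)$\Rightarrow$(c), just apply (b) to $\mathcal{A}=\{A\}$, whose only finite intersection is $A$ itself, and $A\in\mathcal{R}\subseteq R^{\uparrow}$. For (c)$\Rightarrow$(a), given $A,B\in\mathfrak{m}$ with $A\cup B\in\mathcal{R}$, choose by (c) an $\mathfrak{m}$-ultrafilter $p$ with $A\cup B\in p\subseteq R^{\uparrow}$; Theorem \ref{T2} forces $A\in p$ or $B\in p$, and either way $R^{\uparrow}$ then contains $A$ or $B$, which unpacks to the required $C\in\mathcal{R}$ with $C\subseteq A$ or $C\subseteq B$.

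The real work lives in (a)$\Rightarrow$(b). My approach is a Zorn's Lemma enlargement, modeled on the analogous result for partition regular families of subsets of $\mathbb{N}$. Let $\mathfrak{F}$ be the family of all $\mathcal{B}\subseteq\mathfrak{m}$ with $\mathcal{A}\subseteq\mathcal{B}$ such that every finite intersection from $\mathcal{B}$ lies in $R^{\uparrow}$; the hypothesis of (b) says $\mathcal{A}\in\mathfrak{F}$, and unions of chains in $\mathfrak{F}$ clearly stay in $\mathfrak{F}$, so Zorn produces a maximal element $p\in\mathfrak{F}$. I would first check that $p$ is already an $\mathfrak{m}$-filter: closure under supersets in $\mathfrak{m}$ and under finite intersections both follow from maximality combined with the observation that any finite intersection involving the candidate element reduces to a finite intersection from $p$.

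The decisive step, and the place where partition regularity of $\mathcal{R}$ enters, is showing that $p$ is an ultrafilter. Using Theorem \ref{T2}, it is enough to show $A\in p$ or $A^{c}\in p$ for every $A\in\mathfrak{m}$. Suppose neither holds. By maximality of $p$ there exist $B_{1},B_{2}\in p$ with $B_{1}\cap A\notin R^{\uparrow}$ and $B_{2}\cap A^{c}\notin R^{\uparrow}$; replacing both by $B:=B_{1}\cap B_{2}\in p$ gives $B\cap A,\ B\cap A^{c}\notin R^{\uparrow}$. But $B\in R^{\uparrow}$, so pick $C\in\mathcal{R}$ with $C\subseteq B$. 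Then $C=(C\cap A)\cup(C\cap A^{c})$ with both pieces measurable, so partition regularity of $\mathcal{R}$ yields some $C'\in\mathcal{R}$ contained in $C\cap A$ or in $C\cap A^{c}$. That forces $B\cap A\in R^{\uparrow}$ or $B\cap A^{c}\in R^{\uparrow}$, the sought contradiction. Finally, $p\subseteq R^{\uparrow}$ is immediate because each singleton $\{B\}\subseteq p$ is a finite subfamily, giving $B\in R^{\uparrow}$.

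The only subtle point — and the one I would be careful about — is the ultrafilter step above: one must first pass to the single element $B=B_{1}\cap B_{2}\in p$ (which needs $p$ to be closed under intersection, a fact we already verified) before invoking partition regularity, since the definition of partition regularity delivers $C'$ inside one of the \emph{parts} of a two-element partition, not inside one of two arbitrary witnesses. Everything else is essentially bookkeeping with the definition of $R^{\uparrow}$ and Theorem \ref{T2}.
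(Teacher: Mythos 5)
Your proposal is correct, and the two easy implications match the paper's. For the main implication (a)$\Rightarrow$(b), however, you take a genuinely different route. The paper follows the Hindman--Strauss template: it introduces the dual family $\mathcal{B}=\{A\in\mathfrak{m}:A\cap B\neq\emptyset\mbox{ for all }B\in\mathcal{R}\}$, shows that $\mathcal{A}\cup\mathcal{B}$ has the finite intersection property (this is where partition regularity is used, applied to the finite partition $B=\bigcup_{A\in\mathcal{G}}(B\setminus A)$, so implicitly the finite-union version of partition regularity via induction), extends to an ultrafilter $p$ by Theorem 2.9, and then gets $p\subseteq R^{\uparrow}$ for free from $\mathcal{B}\subseteq p$. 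You instead run Zorn's Lemma directly on families containing $\mathcal{A}$ all of whose finite intersections lie in $R^{\uparrow}$, and then verify by hand that the maximal such family is an ultrafilter; partition regularity enters only in its literal two-cell form, applied to $C=(C\cap A)\cup(C\cap A^{c})$ for a single $C\in\mathcal{R}$. Your version avoids the auxiliary family $\mathcal{B}$ and the finite-union strengthening, at the cost of the filter/ultrafilter maximality bookkeeping (which you correctly identify, including the need to pass to $B=B_{1}\cap B_{2}$ and the downward closure of the complement of $R^{\uparrow}$ before invoking partition regularity). One small point worth making explicit in a final write-up: to extract a witness of the form $B_{1}\cap A\notin R^{\uparrow}$ from the failure of $p\cup\{A\}$ to lie in your family $\mathfrak{F}$, you should note that $X\in p$ (maximality again) so that the offending finite intersection can always be written with a factor from $p$. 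With that, the argument is complete.
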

\begin{proof}
(a) implies (b). Let $\mathcal{B}=\{A\in\mathfrak{m}:\forall\ B\in \mathcal{R}, A\cap B\neq \emptyset\}$ and note that
$\mathcal{B}\neq\emptyset$. We may assume that $\mathcal{A}\neq\emptyset$, since $\{X\}$ has the
hypothesized property. Let $\mathcal{C}=\mathcal{A}\cup\mathcal{B}$. We claim that $\mathcal{C}$ has the finite intersection
property. To see this it suffices (since $\mathcal{A}$ and $\mathcal{B}$ are nonempty) to let $\mathcal{F}\in P_f(\mathcal{A})$ and
 $\mathcal{G}\in P_f(\mathcal{B})$ and show that $\bigcap\mathcal{F}\cap\bigcap\mathcal{G}\neq\emptyset$. So suppose instead that we have such $\mathcal{F}$
and $\mathcal{G}$ with $\bigcap\mathcal{F}\cap\bigcap\mathcal{G}=\emptyset$.  Pick $B\in\mathcal{R}$ such that $B \subseteq\bigcap\mathcal{F}$.
Then $B\cap\bigcap\mathcal{G}=\emptyset$
and so $B=\bigcup_{A\in\mathcal{G}}(B\setminus A)$. Pick $A\in\mathcal{G}$ and $C\in\mathcal{R}$ such that $C\subseteq B\setminus A$. Then
$A\cap C=\emptyset$, contradicting the fact that $A\in\mathcal{B}$.

By Theorem \ref{T1}, there is an $\mathfrak{m}$-ultrafilter $p$ such that $\mathcal{C}\subseteq p$. Given $C\in p$,
$X\setminus C\notin\mathcal{B}$ (since $C\cap(X\setminus C)=\emptyset \notin p$). So pick some $B\in\mathcal{R}$ such that $B\cap(X\setminus C)=\emptyset$.
That is, $B\subseteq C$.

(b) implies (c). Let $\mathcal{A}=\{A\}$.

(c) implies (a). Let $\mathcal{F}$ be a finite set of elements of  $\mathfrak{m}$ with $\bigcup\mathcal{F}\in \mathcal{R}$ and let $p$ be an
$\mathfrak{m}$-ultrafilter such that $\bigcup\mathcal{F}\in p$ and for each $C\in p$ there is some $B\in\mathcal{R}$ such
that $B\subseteq C$. Pick by Theorem \ref{T2} some $A\in\mathcal{F}\cap p$.
\end{proof}
\begin{cor}\label{C1}
Let $(X,\mathfrak{m})$ be a measurable space. Let $\mathcal{A}\subseteq \mathfrak{m}$ be an arbitrary
family. If the intersection of every finite subfamily of $\mathcal{A}$ is infinite, then $\mathcal{A}$ is contained in an $\mathfrak{m}$-ultrafilter
all of whose members are infinite. More generally, if $\kappa$ is an infinite cardinal
and if the intersection of every finite subfamily of $\mathcal{A}$ has cardinality at least $\kappa$, then there exists
$\mathfrak{m}$-ultrafilter $p$ such that $|A|\geq\kappa$ for every $A\in p$.
\end{cor}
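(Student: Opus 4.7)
The plan is to apply the preceding partition-regularity theorem (the one characterizing partition-regular families via $\mathfrak{m}$-ultrafilters contained in $R^{\uparrow}$) to the family
\[
\R_\kappa=\{A\in\mathfrak{m}:|A|\geq\kappa\}.
\]
So the first task is to verify that $\R_\kappa$ satisfies the hypotheses of that theorem, namely that $\emptyset\notin\R_\kappa$ (immediate since $\kappa$ is infinite) and that $\R_\kappa$ is partition regular.

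For partition regularity, suppose $A,B\in\mathfrak{m}$ with $A\cup B\in\R_\kappa$. Since $\kappa$ is infinite, at least one of $A,B$ must be infinite, hence $|A\cup B|=\max(|A|,|B|)\geq\kappa$, and therefore $|A|\geq\kappa$ or $|B|\geq\kappa$. In the first case $A\in\R_\kappa$ and $A\subseteq A$; in the second $B\in\R_\kappa$ and $B\subseteq B$. Either way the defining condition of partition regularity is met. A further simple observation is that $\R_\kappa^{\uparrow}=\R_\kappa$, because cardinality is monotone under inclusion: if $A\subseteq B$ and $|A|\geq\kappa$ then $|B|\geq\kappa$.

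Now assume the hypothesis of the corollary: every finite subfamily of $\mathcal{A}$ has intersection of cardinality at least $\kappa$. This says exactly that every finite intersection of members of $\mathcal{A}$ lies in $\R_\kappa=\R_\kappa^{\uparrow}$, which is the condition (b) hypothesis of the partition-regularity theorem applied to the family $\mathcal{A}$. By that theorem (a)$\Rightarrow$(b), there is an $\mathfrak{m}$-ultrafilter $p$ with $\mathcal{A}\subseteq p\subseteq\R_\kappa^{\uparrow}=\R_\kappa$. The inclusion $p\subseteq\R_\kappa$ is precisely the statement that $|A|\geq\kappa$ for every $A\in p$.

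The first assertion of the corollary is just the special case $\kappa=\aleph_0$. I do not expect any real obstacle here; the only subtle point is the cardinal-arithmetic identity $|A\cup B|=\max(|A|,|B|)$ used to establish partition regularity of $\R_\kappa$, which is where the infinitude of $\kappa$ is essential (the statement would fail outright for finite $\kappa$).
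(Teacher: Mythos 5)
Your proof is correct and follows the route the paper clearly intends (its own ``proof'' is just the remark that the statement is obvious from the preceding material): you apply Theorem 2.9, implication (a)$\Rightarrow$(b), to the partition regular family $\mathcal{R}_\kappa=\{A\in\mathfrak{m}:|A|\geq\kappa\}$, for which $\mathcal{R}_\kappa^{\uparrow}=\mathcal{R}_\kappa$. The verification of partition regularity via $|A\cup B|=\max(|A|,|B|)$ for infinite unions is exactly the needed (and only nontrivial) step, so nothing is missing.
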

\begin{proof}
The proof is obvious.
\end{proof}

Let $(X,\mathfrak{m})$ be a measurable space. The collection of all $\mathfrak{m}$-ultrafilters
is denoted by $\mathfrak{m}^\beta$. For each $A\in\mathfrak{m}$, we define
\[
\widehat{A}=\{p\in\mathfrak{m}^\beta:A\in p\}.
\]
\begin{lem}\label{L2}
Let $X$ be a nonempty set, let $\mathfrak{m}$ be an infinite $\sigma$-algebra on $X$, and let $A,B\in \mathfrak{m}$.

$(a)$ $\widehat{A\cap B}=\widehat{A}\cap\widehat{B}$;

$(b)$ $\widehat{A\cup B}=\widehat{A}\cup\widehat{B}$;

$(c)$ $\widehat{X\setminus A}=\mathfrak{m}^{\beta}\setminus\widehat{A}$;

$(d)$ $\widehat{A}=\emptyset$ if and only if $A=\emptyset$;

$(e)$ $\widehat{A}=\mathfrak{m}^{\beta}$ if and only if $A=X$;

$(f)$ $\widehat{A}=\widehat{B}$ if and only if $A=B$.
\end{lem}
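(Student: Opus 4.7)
The plan is to derive each of the six clauses using only the $\mathfrak{m}$-filter axioms together with the equivalent characterizations of an $\mathfrak{m}$-ultrafilter provided by Theorem \ref{T2} and the existence result of Theorem \ref{T1}. None of the six claims requires new machinery; each is a short rewrite of ``$A \in p$'' in terms of set operations.

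First I would dispatch (a) directly from the filter axioms: if $A\cap B\in p$ then $A,B\in p$ by the superset axiom (b) in the definition of $\mathfrak{m}$-filter, and conversely if $A,B\in p$ then $A\cap B\in p$ by axiom (c). Next, (b) splits into an easy inclusion $\widehat{A}\cup\widehat{B}\subseteq\widehat{A\cup B}$ (superset axiom again) and the reverse inclusion, which is precisely the ultrafilter characterization $(a)\Rightarrow(b)$ of Theorem \ref{T2}. For (c), disjointness $\widehat{A}\cap\widehat{X\setminus A}=\emptyset$ holds because $A\cap(X\setminus A)=\emptyset\notin p$ for any filter, while $\widehat{A}\cup\widehat{X\setminus A}=\mathfrak{m}^\beta$ is exactly the dichotomy $(a)\Rightarrow(c)$ of Theorem \ref{T2}.

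The only clause with genuine content is (d). The forward direction is immediate: $\emptyset\notin p$ for any filter $p$, so $\widehat{\emptyset}=\emptyset$. For the converse, if $A\neq\emptyset$ then $\{A\}\subseteq\mathfrak{m}$ has the finite intersection property, and Theorem \ref{T1} produces an $\mathfrak{m}$-ultrafilter $p$ with $A\in p$, so $\widehat{A}\neq\emptyset$. Clause (e) is then a quick corollary: if $A=X$ every filter contains $X$ by definition, and if $A\neq X$ then $X\setminus A\neq\emptyset$, so by (d) some $p\in\mathfrak{m}^\beta$ satisfies $X\setminus A\in p$, and (c) gives $p\notin\widehat{A}$.

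Finally, for (f) I would avoid constructing an ultrafilter separating $A$ from $B$ by hand and instead chain the earlier parts. Assuming $\widehat{A}=\widehat{B}$, parts (a) and (c) give
\[
\widehat{A\setminus B}=\widehat{A\cap(X\setminus B)}=\widehat{A}\cap\widehat{X\setminus B}=\widehat{B}\cap(\mathfrak{m}^\beta\setminus\widehat{B})=\emptyset,
\]
so by (d) we obtain $A\setminus B=\emptyset$; the symmetric argument yields $B\setminus A=\emptyset$, hence $A=B$. The main obstacle, such as it is, lies in (d): one must invoke Zorn's Lemma through Theorem \ref{T1} to guarantee that every nonempty measurable set is actually witnessed by some ultrafilter, and every later part depends on this nontriviality.
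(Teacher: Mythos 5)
Your proof is correct; the paper itself dismisses this lemma with ``the proof is obvious,'' and your argument --- the filter axioms for $(a)$, the ultrafilter dichotomies of Theorem \ref{T2} for $(b)$ and $(c)$, Theorem \ref{T1} for $(d)$, and the reduction of $(f)$ to $(d)$ via $\widehat{A\setminus B}=\widehat{A}\cap(\mathfrak{m}^\beta\setminus\widehat{B})$ --- is exactly the standard argument being omitted. Nothing in your write-up deviates from the intended route, and you correctly isolate the one point of substance, namely that $(d)$ (and hence $(e)$ and $(f)$) needs the Zorn's Lemma existence result to witness nonempty sets by ultrafilters.
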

\begin{proof}
The proof is obvious.
\end{proof}
The collection $\{\widehat{A}:A\in\mathfrak{m}\}$ is a basis for topology on $\mathfrak{m}^\beta$.
\begin{theorem}\label{T3}
Let $X$ be a nonempty set, let $\mathfrak{m}$ be an infinite $\sigma$-algebra on $X$. Then $\mathfrak{m}^\beta$ is a compact and  Hausdorff space.
\end{theorem}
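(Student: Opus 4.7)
The plan is to handle Hausdorffness and compactness separately, in each case reducing to the basic clopen sets $\widehat{A}$ and invoking the results already established (Theorem \ref{T2}, Theorem \ref{T1}, and Lemma \ref{L2}). Note first that by Lemma \ref{L2}(c) every basic open set $\widehat{A}$ is also closed, so the topology has a base of clopen sets.

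For the Hausdorff property, I would take two distinct ultrafilters $p,q\in\mathfrak{m}^\beta$ and pick, without loss of generality, some $A\in p\setminus q$. Because $A\notin q$ and $q$ is an $\mathfrak{m}$-ultrafilter, Theorem \ref{T2}(d) supplies some $B\in q$ with $A\cap B=\emptyset$. Then $p\in\widehat{A}$, $q\in\widehat{B}$, and by Lemma \ref{L2}(a),(d) we have $\widehat{A}\cap\widehat{B}=\widehat{A\cap B}=\widehat{\emptyset}=\emptyset$, giving the required disjoint neighbourhoods.

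For compactness, I would pass to the finite-intersection-property characterization. Since $\{\widehat{A}:A\in\mathfrak{m}\}$ is a base, it suffices to show that every cover of $\mathfrak{m}^\beta$ by basic open sets has a finite subcover: any cover $\U$ can be refined to a basic cover by replacing each $U\in\U$ with the basic sets it contains, and a finite subcover of the refinement lifts to a finite subcover of $\U$. So let $\{\widehat{A_i}\}_{i\in I}$ cover $\mathfrak{m}^\beta$. Taking complements via Lemma \ref{L2}(c) gives $\bigcap_{i\in I}\widehat{X\setminus A_i}=\emptyset$. If no finite subcollection covered $\mathfrak{m}^\beta$, then every finite subfamily of $\{X\setminus A_i\}_{i\in I}$ would have a common ultrafilter, hence by Lemma \ref{L2}(a),(d) the family $\{X\setminus A_i\}_{i\in I}$ would have the finite intersection property in $\mathfrak{m}$; Theorem \ref{T1} would then produce an $\mathfrak{m}$-ultrafilter $p$ with all $X\setminus A_i\in p$, contradicting $\bigcap_i\widehat{X\setminus A_i}=\emptyset$.

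The only mildly delicate point is the reduction from arbitrary open covers to basic open covers; everything else is a direct translation between the ultrafilter language (handled by Theorems \ref{T2} and \ref{T1}) and the topological language (handled by Lemma \ref{L2}). No genuine obstacle is expected, since the argument is the standard Stone-type compactness proof adapted to the $\sigma$-algebra setting.
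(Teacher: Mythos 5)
Your proposal is correct and follows essentially the same route as the paper: Hausdorffness via the ultrafilter dichotomy (your use of Theorem \ref{T2}(d) to produce a disjoint $B\in q$ is just the general form of the paper's explicit choice $B=X\setminus A$), and compactness by reducing to the basic clopen sets and invoking Theorem \ref{T1} on a family with the finite intersection property. Your open-cover formulation is merely the complemented dual of the paper's closed-family-with-FIP formulation, so there is no substantive difference.
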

\begin{proof}
Suppose that $p$ and $q$ are distinct elements of $\mathfrak{m}^\beta$. If $A\in p\setminus q$, then
$X\setminus A\in q$. So $\widehat{A}$ and  $\widehat{X\setminus A}$ are disjoint open subsets of $\mathfrak{m}^\beta$ containing $p$ and $q$, respectively. Thus $\mathfrak{m}^\beta$ is Hausdorff.

 To show that $\mathfrak{m}^\beta$ is compact, we shall consider a family
$\mathcal{A} $ of sets of the form $\widehat{A}$ with the finite intersection property and show that $\mathcal{A}$ has a
nonempty intersection. Let $\mathcal{B}=\{A\in\mathfrak{m}:\widehat{A}\in\mathcal{A}\}$. If $F\in P_f(\mathcal{B})$, then there is
some $p\in \bigcap_{A\in \mathcal{F}}\widehat{A}$ and so $\bigcap\mathcal{F}\in p$ and thus $\bigcap\mathcal{F}\neq \emptyset$. That is, B has the finite
intersection property, so by Theorem \ref{T1} pick $q\in\mathfrak{m}^\beta$ with $\mathcal{B}\subseteq q$. Then $q\in \bigcap\mathcal{A}$.
\end{proof}
\begin{theorem}
Let $X$ be a nonempty set and let $\mathfrak{m}$ be an infinite $\sigma$-algebra on $X$. Then the sets of the form $\widehat{A}$ are the clopen
subsets of $\mathfrak{m}^\beta$.
\end{theorem}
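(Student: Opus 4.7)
The plan is to prove a two-way inclusion: every set of the form $\widehat{A}$ is clopen, and every clopen subset of $\mathfrak{m}^\beta$ arises as $\widehat{A}$ for some $A\in\mathfrak{m}$.

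For the first direction, I would simply invoke Lemma \ref{L2}(c). Since $\widehat{A}$ is a basic open set by the definition of the topology on $\mathfrak{m}^\beta$, its complement $\mathfrak{m}^\beta\setminus\widehat{A}=\widehat{X\setminus A}$ is also a basic open set, hence $\widehat{A}$ is closed as well. So $\widehat{A}$ is clopen for every $A\in\mathfrak{m}$.

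For the converse, suppose $U\subseteq\mathfrak{m}^\beta$ is clopen. Because $U$ is open, it can be written as a union of basic open sets, say $U=\bigcup_{i\in I}\widehat{A_i}$ with each $A_i\in\mathfrak{m}$. Because $U$ is closed in the compact Hausdorff space $\mathfrak{m}^\beta$ (Theorem \ref{T3}), $U$ is itself compact. The cover $\{\widehat{A_i}\}_{i\in I}$ of $U$ therefore admits a finite subcover, so there exist $A_1,\dots,A_n\in\mathfrak{m}$ with $U=\widehat{A_1}\cup\cdots\cup\widehat{A_n}$. Applying Lemma \ref{L2}(b) inductively gives
\[
U=\widehat{A_1}\cup\cdots\cup\widehat{A_n}=\widehat{A_1\cup\cdots\cup A_n},
\]
and since $\mathfrak{m}$ is a $\sigma$-algebra, $A:=A_1\cup\cdots\cup A_n\in\mathfrak{m}$ and $U=\widehat{A}$, as desired.

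There is essentially no serious obstacle here; the only subtlety is recognising that one must exploit compactness of $\mathfrak{m}^\beta$ (already established in Theorem \ref{T3}) together with the closedness of $U$ in order to reduce an a priori arbitrary union of basic open sets to a finite one. Lemma \ref{L2} then turns the finite union into a single basic clopen set.
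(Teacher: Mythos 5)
Your proof is correct and follows essentially the same route as the paper: both directions use Lemma \ref{L2}(c) for clopenness of $\widehat{A}$, and the converse uses compactness of a closed subset of $\mathfrak{m}^\beta$ to extract a finite subcover by basic open sets, which Lemma \ref{L2}(b) collapses to a single $\widehat{A}$. Your write-up is in fact slightly more explicit than the paper's about why the finite union lands back in $\mathfrak{m}$.
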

\begin{proof}
Each set $\widehat{A}$ is closed as well as open.
Suppose that $C$ is any clopen subset of $\mathfrak{m}^\beta$. Let $\mathcal{A}=\{\widehat{A}:A\in \mathfrak{m} \ and \ \widehat{A}\subseteq C \}$.
Since $C$ is open, $\mathcal{A}$ is an open cover of $C$. Since $C$ is closed, it is compact by Theorem 2.5(b). Now by Theorem \ref{T3}, pick a finite subfamily $\mathcal{F}$ of $\mathfrak{m}$ such that $C=\bigcup_{A\in\mathcal{F}}\widehat{A}$. Then by Lemma \ref{L2}, $C=\widehat{\bigcup \mathcal{F}}$.
\end{proof}
\begin{theorem}\label{T5}
Let $X$ be a nonempty set and let $\mathfrak{m}$ be a $\sigma$-algebra on $X$ containing $P_f(X)$.

$(a)$ For every $A\in\mathfrak{m}$, $\widehat{A}=cl_{\mathfrak{m}^\beta}e(A)$ where $e:X\rightarrow\mathfrak{m}^\beta$ is defined by $e(x)=\hat{x}$.

$(b)$ For any $A\in\mathfrak{m}$ and any $p\in \mathfrak{m}^\beta$, $p\in cl_{\mathfrak{m}^\beta}e(A) $ if and only if $A\in p$.
\end{theorem}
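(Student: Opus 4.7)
The plan is to prove (a) by two inclusions and then derive (b) as an immediate reformulation, since by definition $p\in\widehat{A}$ exactly when $A\in p$.

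For the inclusion $cl_{\mathfrak{m}^\beta}e(A)\subseteq\widehat{A}$, I would first observe the trivial containment $e(A)\subseteq\widehat{A}$: if $x\in A$, then $A\in\hat{x}=e(x)$, so $e(x)\in\widehat{A}$. By the theorem immediately preceding this one, every set of the form $\widehat{A}$ is clopen, hence closed, so taking closures gives $cl_{\mathfrak{m}^\beta}e(A)\subseteq\widehat{A}$.

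For the reverse inclusion, let $p\in\widehat{A}$ and let $U$ be any open neighborhood of $p$. Since $\{\widehat{B}:B\in\mathfrak{m}\}$ is a basis, I can pick $B\in\mathfrak{m}$ with $p\in\widehat{B}\subseteq U$, i.e.\ $B\in p$. Because $p$ is an $\mathfrak{m}$-filter and both $A,B\in p$, property (c) of the definition gives $A\cap B\in p$, so in particular $A\cap B\neq\emptyset$. Pick $x\in A\cap B$; then $\hat{x}$ is an $\mathfrak{m}$-ultrafilter (since for every $C\in\mathfrak{m}$ either $x\in C$ or $x\in X\setminus C$, so $\hat{x}$ satisfies (c) of Theorem \ref{T2}), and by construction $B\in\hat{x}$ so $e(x)=\hat{x}\in\widehat{B}\cap e(A)\subseteq U\cap e(A)$. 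Thus every neighborhood of $p$ meets $e(A)$, giving $p\in cl_{\mathfrak{m}^\beta}e(A)$.

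Part (b) is then just the combination of (a) with the definition of $\widehat{A}$: $p\in cl_{\mathfrak{m}^\beta}e(A)=\widehat{A}$ iff $A\in p$. I do not anticipate a real obstacle here; the only point requiring attention is verifying that $\hat{x}\in\mathfrak{m}^\beta$ for each $x\in X$, which follows directly from Theorem \ref{T2}(c), so the hypothesis $P_f(X)\subseteq\mathfrak{m}$ is used only implicitly (ensuring, via measurability of singletons, that the map $e$ is injective and that $e(A)$ faithfully represents $A$).
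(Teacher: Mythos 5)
Your proof is correct: both inclusions are established cleanly, and the one point genuinely needing care --- that $\hat{x}$ is an $\mathfrak{m}$-ultrafilter, which you justify via Theorem \ref{T2}(c) --- is handled, with (b) then an immediate consequence of (a). The paper simply declares this proof obvious, and your argument is the standard one it has in mind; the only small quibble is that the hypothesis $P_f(X)\subseteq\mathfrak{m}$ is in fact used nowhere in your argument, so the closing remark about it being needed ``implicitly'' for injectivity of $e$ could be dropped (injectivity plays no role in either part).
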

\begin{proof}
The proof is obvious.
\end{proof}
\begin{theorem}\label{T4}
Let $X$ be a nonempty set and let $\mathfrak{m}$ be a $\sigma$-algebra on $X$  and $\mathfrak{m}$ separate $X$. Let $e:X\rightarrow\mathfrak{m}^\beta$ is defined by $e(x)=\hat{x}$.

$(a)$ The mapping $e$ is $\mathfrak{m}$-measurable.

$(b)$ If $(X, \mathfrak{m})$ is measurable space and let $P_f(X)\subseteq\mathfrak{m}$, then the mapping $e$ is injective.

 $(c)$ $e(X)$ is a dense subset of $\mathfrak{m}^\beta$.
\end{theorem}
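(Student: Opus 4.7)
The plan is to handle the three parts separately, all resting on a single identity: $e^{-1}(\widehat{A}) = A$ for every $A \in \mathfrak{m}$. This is immediate from $\hat{x} = \{C \in \mathfrak{m} : x \in C\}$, which gives $e(x) \in \widehat{A} \iff A \in \hat{x} \iff x \in A$.

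For part (a), I would invoke Lemma \ref{L2}, which shows that $\{\widehat{A} : A \in \mathfrak{m}\}$ is a basis for the topology on $\mathfrak{m}^\beta$ closed under finite unions, intersections, and complements. Hence $\mathfrak{m}$-measurability of $e$ reduces to checking preimages of basic open sets, and the identity above gives exactly what is required. For part (b), under the stated assumption $P_f(X) \subseteq \mathfrak{m}$ every singleton lies in $\mathfrak{m}$; given $x \ne y$, we have $\{x\} \in \hat{x}$ but $\{x\} \notin \hat{y}$, so $e(x) \ne e(y)$. (The theorem's standing separation hypothesis alone already suffices: disjoint $A, B \in \mathfrak{m}$ with $x \in A$ and $y \in B$ force $A \in \hat{x} \setminus \hat{y}$, since $A \in \hat{y}$ together with $B \in \hat{y}$ would put $\emptyset = A \cap B$ into the filter $\hat{y}$.) For part (c), let $\widehat{A}$ be any nonempty basic open set; by Lemma \ref{L2}(d) the set $A$ is nonempty, and picking $x \in A$ yields $e(x) = \hat{x} \in \widehat{A}$. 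Hence every nonempty basic open meets $e(X)$, which establishes density.

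There is essentially no hard step; the only subtlety worth flagging is in (a). A literal reading of ``$\mathfrak{m}$-measurable'' as preimage of every open set lying in $\mathfrak{m}$ is too strong in general, since an uncountable union $\bigcup_{i \in I} \widehat{A_i}$ need not pull back to an element of $\mathfrak{m}$. The natural and intended reading, consistent with the abstract's ``generated topology by $\sigma$-algebra $\mathfrak{m}$'', is that $e^{-1}$ carries the generating family $\{\widehat{A}\}$ into $\mathfrak{m}$, and this is exactly what the identity $e^{-1}(\widehat{A}) = A$ provides.
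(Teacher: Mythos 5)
Your proof is correct and follows essentially the same route as the paper: part (a) rests on the identity $e^{-1}(\widehat{A})=A$, which is exactly the paper's one-line argument, and the paper simply declares (b) and (c) obvious, with your filled-in details (singletons or separation for injectivity, nonemptiness of $A$ for density) being the standard and correct ones. Your remark that measurability should be read as pulling back the generating family $\{\widehat{A}:A\in\mathfrak{m}\}$ into $\mathfrak{m}$ is a fair and worthwhile clarification, but it does not change the substance of the argument.
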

\begin{proof}
$(a)$ Since $A=e^{-1}(\widehat{A})$ for every $A\in \mathfrak{m}$, so the mapping $e$ is $\mathfrak{m}$-measurable.

The proofs $(b)$ and $(c)$ are obvious.
\end{proof}
\begin{theorem}
Let $X$ be a nonempty set and let $\mathfrak{m}$ be a $\sigma$-algebra on $X$ containing $P_f(X)$. If $U$ is an open subset of $\mathfrak{m}^\beta$,
$cl_{\mathfrak{m}^\beta}U $ is also open.
\end{theorem}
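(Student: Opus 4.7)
The plan is to exhibit $B\in\mathfrak{m}$ with $cl_{\mathfrak{m}^\beta}(U)=\widehat{B}$, for then $\widehat{B}$ is clopen by Lemma~\ref{L2} and in particular open. I would use two inputs: Theorem~\ref{T4}, which (via $P_f(X)\subseteq\mathfrak{m}$) makes the evaluation map $e$ injective with dense image, together with Theorem~\ref{T5}(a), which identifies basic clopens with closures of point-images.

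First I would reduce the problem to a single subset of $X$. Writing $U=\bigcup_{A\in\mathcal{A}}\widehat{A}$ via the basis and setting $V=\bigcup_{A\in\mathcal{A}}A\subseteq X$, injectivity of $e$ yields $U\cap e(X)=e(V)$, while density of $e(X)$ combined with openness of $U$ gives the standard identity $cl_{\mathfrak{m}^\beta}(U)=cl_{\mathfrak{m}^\beta}(U\cap e(X))=cl_{\mathfrak{m}^\beta}(e(V))$. A direct check in the basis topology (of the same flavour as the proof of Theorem~\ref{T5}(a)) shows that $p\in cl_{\mathfrak{m}^\beta}(e(V))$ iff $C\cap V\neq\emptyset$ for every $C\in p$.

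Next I would produce $B$ by applying Zorn's Lemma to $\{D\in\mathfrak{m}:V\subseteq D\}$, ordered by reverse inclusion, to obtain a minimal measurable superset $B$ of $V$. Then $cl_{\mathfrak{m}^\beta}(e(V))=\widehat{B}$: if $p\notin\widehat{B}$, then $X\setminus B\in p$ by Theorem~\ref{T2} and $(X\setminus B)\cap V=\emptyset$ shows $p\notin cl_{\mathfrak{m}^\beta}(e(V))$; conversely if $p\in\widehat{B}$ and $C\in p$, then $C\cap B\in p$ is nonempty, and if $C\cap V$ were empty we would have $V\subseteq B\setminus C\subsetneq B$, contradicting the minimality of $B$. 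Hence $cl_{\mathfrak{m}^\beta}(U)=\widehat{B}$.

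The hard part will be the Zorn step, which requires every totally ordered family of measurable supersets of $V$ to admit a measurable lower bound. The natural candidate, the intersection of the family, lies in $\mathfrak{m}$ automatically only when the family is countable, whereas $\mathcal{A}$ and the chains involved can be uncountable and $\mathfrak{m}$ is only countably complete. Overcoming this is the crux of the argument: one must leverage $P_f(X)\subseteq\mathfrak{m}$ -- in particular the fact that each $e(x)=\widehat{\{x\}}$ is an isolated clopen point and $e(X)$ is open dense in $\mathfrak{m}^\beta$ -- either to reduce the cover problem to an effectively countable one or to construct $B$ directly without going through Zorn.
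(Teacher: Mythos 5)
Your first reduction is exactly the first half of the paper's own argument: the paper also sets $A=e^{-1}[U]$ and shows $U\subseteq cl_{\mathfrak{m}^\beta}e(A)\subseteq cl_{\mathfrak{m}^\beta}U$ using density of $e(X)$ (Theorem \ref{T4}(c)), and your identification of $cl_{\mathfrak{m}^\beta}e(V)$ with $\{p\in\mathfrak{m}^\beta: C\cap V\neq\emptyset\ \text{for all}\ C\in p\}$ is correct. The divergence is in the second half. The paper concludes by invoking Theorem \ref{T5}(a) to write $cl_{\mathfrak{m}^\beta}U=cl_{\mathfrak{m}^\beta}e(A)=\widehat{A}$, which tacitly assumes $e^{-1}[U]\in\mathfrak{m}$; that is automatic when $\mathfrak{m}=P(X)$ but not for a general $\sigma$-algebra, since $U$ may be an uncountable union of basic sets. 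You saw this obstruction and tried to route around it by taking a \emph{minimal} measurable superset $B$ of $V$ via Zorn's Lemma, and your verification that such a $B$ would satisfy $cl_{\mathfrak{m}^\beta}e(V)=\widehat{B}$ is fine. But the Zorn step itself is a genuine gap, as you acknowledge: a chain of measurable supersets of $V$, ordered by reverse inclusion, has no measurable lower bound in general, because $\mathfrak{m}$ is only closed under countable intersections. The proposal is therefore incomplete precisely at the step you flag as the crux.

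Worse, the gap cannot be closed, so neither your argument nor the paper's is salvageable at this level of generality. Take $X=\mathbb{R}$ and let $\mathfrak{m}$ be the countable--cocountable $\sigma$-algebra, which contains $P_f(X)$. Let $V\subseteq X$ be uncountable with uncountable complement, and put $U=e(V)=\bigcup_{x\in V}\widehat{\{x\}}$, an open set (each $\widehat{\{x\}}=\{\hat{x}\}$). One checks directly that $cl_{\mathfrak{m}^\beta}U\cap e(X)=e(V)$: for $x\in X$, taking $C=\{x\}$ in your criterion shows $\hat{x}\in cl_{\mathfrak{m}^\beta}e(V)$ if and only if $x\in V$. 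If $cl_{\mathfrak{m}^\beta}U$ were open it would be clopen, hence of the form $\widehat{B}$ with $B\in\mathfrak{m}$ by the paper's characterization of clopen sets; then $e(B)=\widehat{B}\cap e(X)=e(V)$ forces $B=V$ by injectivity of $e$, contradicting $V\notin\mathfrak{m}$. Equivalently, the measurable supersets of $V$ are exactly the cocountable sets containing $V$, and this family has no minimal element --- which is exactly where your Zorn argument dies. The theorem needs an extra hypothesis (for instance $\mathfrak{m}=P(X)$, or restricting to open $U$ with $e^{-1}[U]\in\mathfrak{m}$, e.g.\ countable unions of basic open sets); under such a hypothesis your argument and the paper's coincide, since the minimal measurable superset of $V$ is then $V$ itself.
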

\begin{proof}
If $U=\emptyset$, the conclusion is trivial and so we assume that $U\neq\emptyset$. Put
$A=e^{-1}[U]$. We claim first that $U\subseteq cl_{\mathfrak{m}^\beta}e(A)$. So let $p\in U$ and let $\widehat{B}$ be a basic
neighborhood of $p$. Then $U\cap \widehat{B}$ is a nonempty open set and so by Theorem \ref{T4} $(c)$, $U\cap\widehat{B}\cap e(X)\neq \emptyset$.
So pick $b\in B$ with $e(b)\in U$. Then $e(b)\in \widehat{B}\cap e(A)$ and so $\widehat{B}\cap e(A)\neq\emptyset$.
Also $e[A]\subseteq U $ and hence $U\subseteq cl_{\mathfrak{m}^\beta}e(A)\subseteq cl_{\mathfrak{m}^\beta}U$. By Theorem \ref{T5} $(a)$,
$cl_{\mathfrak{m}^\beta}U=cl_{\mathfrak{m}^\beta}e(A)=\widehat{A}$,
and so $cl_{\mathfrak{m}^\beta}U$ is open in $\mathfrak{m}^\beta$.
\end{proof}
\begin{defn}
Let $X$ be a nonempty set, let $\mathfrak{m}$ be a $\sigma$-algebra on $X$ containing $P_f(X)$, and let $\mathcal{A}$ be an $\mathfrak{m}$-filter.
We define $\widehat{\mathcal{A}}=\{p\in \mathfrak{m}^\beta: \mathcal{A}\subseteq p\}$.
\end{defn}
\begin{theorem}
Let $X$ be a nonempty set and let $\mathfrak{m}$ be a $\sigma$-algebra on $X$ containing $P_f(X)$.

$(a)$ If  $\mathcal{A}$ is an $\mathfrak{m}$-filter, then $\widehat{A}$ is a closed subset of $\mathfrak{m}^\beta$.

$(b)$ If $\emptyset \neq A\subseteq \mathfrak{m}^\beta$ and $\mathcal{A}=\bigcap A$, then $\mathcal{A}$ is an $\mathfrak{m}$-filter and $\mathcal{\widehat{A}}=cl_{\mathfrak{m}^\beta}A$.
\end{theorem}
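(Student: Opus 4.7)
The plan is to reduce (a) to a basic-open argument and handle (b) by checking the filter axioms pointwise and then proving the topological identity via a clean contradiction.

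For (a), I would just unfold the definition: $p \in \widehat{\mathcal{A}}$ if and only if $B \in p$ for every $B \in \mathcal{A}$, so
\[
\widehat{\mathcal{A}} = \bigcap_{B \in \mathcal{A}} \widehat{B}.
\]
By the preceding theorem characterising clopen sets (together with Lemma \ref{L2}), each $\widehat{B}$ is clopen, so $\widehat{\mathcal{A}}$ is an intersection of closed sets and therefore closed.

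For part (b), I would first verify that $\mathcal{A} = \bigcap A$ is an $\mathfrak{m}$-filter; this is routine but uses the hypothesis $A \neq \emptyset$ to guarantee $\emptyset \notin \mathcal{A}$ and $X \in \mathcal{A}$, while upward closure and closure under finite intersections lift pointwise from each individual $p \in A$ to the intersection. For the identity, one inclusion is short: every $p \in A$ contains $\mathcal{A}$, so $A \subseteq \widehat{\mathcal{A}}$, and since $\widehat{\mathcal{A}}$ is closed by part (a), $\mathrm{cl}_{\mathfrak{m}^\beta} A \subseteq \widehat{\mathcal{A}}$.

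The reverse inclusion $\widehat{\mathcal{A}} \subseteq \mathrm{cl}_{\mathfrak{m}^\beta} A$ is where the argument bites. Taking $p \in \widehat{\mathcal{A}}$ and an arbitrary basic neighbourhood $\widehat{B}$ of $p$ (so $B \in p$), I would show $\widehat{B} \cap A \neq \emptyset$ by contradiction: if the intersection were empty, then $B \notin q$ for every $q \in A$, whence $X \setminus B \in q$ for every $q \in A$ by Theorem \ref{T2}. Therefore $X \setminus B \in \bigcap A = \mathcal{A} \subseteq p$, and combined with $B \in p$ this forces $\emptyset = B \cap (X \setminus B) \in p$, contradicting the filter axioms. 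The only nontrivial moment in the whole proof is this upgrade from ``$B$ lies in no element of $A$'' to ``$X \setminus B$ lies in every element of $A$'', which is precisely where the ultrafilter characterisation in Theorem \ref{T2} is indispensable.
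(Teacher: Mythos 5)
Your proof is correct and follows essentially the same route as the paper: part (a) is the paper's complement-is-open argument rephrased as an intersection of the closed sets $\widehat{B}$, and part (b) uses the identical contradiction via Theorem \ref{T2} to pass from $\widehat{B}\cap A=\emptyset$ to $X\setminus B\in\mathcal{A}\subseteq p$. The only difference is that you spell out the filter axioms for $\bigcap A$, which the paper leaves as a one-line remark.
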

\begin{proof}
$(a)$ Let $p\in \mathfrak{m}^\beta\setminus \mathcal{A} $. Pick $B\in \mathcal{A}\setminus p$. Then $\widehat{X\setminus B}$ is a neighborhood of $p$
which misses $\widehat{\mathcal{A}}$.

$(b)$ $\mathcal{A}$ is the intersection of a set of $\mathfrak{m}$-filters, so $\mathcal{A}$ is an $\mathfrak{m}$-filter. Further, for each $p\in A$,
$\mathcal{A}\subseteq p$ so $A\subseteq \widehat{\mathcal{A}}$ and thus by $(a)$,
$cl_{\mathfrak{m}^\beta}A\subseteq\mathcal{\widehat{A}}$. To see that $\mathcal{\widehat{A}}\subseteq cl_{\mathfrak{m}^\beta}A$,
let $p\in \widehat{\mathcal{A}}$ and let
$B\in p$. Suppose $\widehat{B}\cap A=\emptyset$. Then for each $q\in A$, $X\setminus B\in q$ so $X\setminus B\in \mathcal{A}\subseteq p$,
a contradiction.
\end{proof}
\begin{theorem}
Let $X$ be a nonempty set, let $\mathfrak{m}$ be a $\sigma$-algebra on $X$ containing $P_f(X)$, $p\in \mathfrak{m}^\beta$, and $U\subseteq \mathfrak{m}^\beta $.
If $U$ is a neighborhood of
$p$ in $\mathfrak{m}^\beta $, then $e^{-1}[U]\in p $.
\end{theorem}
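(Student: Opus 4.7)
The plan is to reduce the general neighborhood $U$ to a basic open set, use the fact that the evaluation map $e$ pulls a basic set $\widehat{A}$ back to $A$, and then invoke the upward-closure axiom of an $\mathfrak{m}$-filter.

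First I would use the basis $\{\widehat{A}:A\in\mathfrak{m}\}$ for the topology on $\mathfrak{m}^\beta$ (noted just before Theorem~\ref{T3}). Since $U$ is a neighborhood of $p$, pick $A\in\mathfrak{m}$ with $p\in\widehat{A}\subseteq U$. By the very definition of $\widehat{A}$, this already forces $A\in p$.

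Next I would compute $e^{-1}[\widehat{A}]$. For $x\in X$,
\[
x\in e^{-1}[\widehat{A}] \iff \hat{x}\in\widehat{A} \iff A\in\hat{x} \iff x\in A,
\]
so $e^{-1}[\widehat{A}]=A$. Since $\widehat{A}\subseteq U$, monotonicity of preimages yields
\[
A \;=\; e^{-1}[\widehat{A}] \;\subseteq\; e^{-1}[U].
\]
Because $p$ is an $\mathfrak{m}$-filter containing $A$, property (b) of the $\mathfrak{m}$-filter axioms then delivers $e^{-1}[U]\in p$.

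The only real obstacle is bookkeeping at the last step: to apply the upward-closure clause we need $e^{-1}[U]\in\mathfrak{m}$, and for a general open $U\subseteq \mathfrak{m}^\beta$ (an arbitrary, possibly uncountable, union of basic clopens) this is not automatic from the $\sigma$-algebra axioms. In the argument above the issue is bypassed by working with the basic neighborhood $\widehat{A}\subseteq U$, whose preimage $A$ visibly lies in $\mathfrak{m}$; the hypothesis is then read as tacitly assuming $e^{-1}[U]\in\mathfrak{m}$ (otherwise the conclusion ``$e^{-1}[U]\in p$'' is not even well-formed), and the chain $A\subseteq e^{-1}[U]$ with $A\in p$ completes the proof.
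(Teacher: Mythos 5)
Your proof is correct and follows essentially the same route as the paper's: pick a basic neighborhood $\widehat{A}\subseteq U$ with $A\in p$, note $A=e^{-1}[\widehat{A}]\subseteq e^{-1}[U]$, and conclude by upward closure. Your caveat about whether $e^{-1}[U]$ actually lies in $\mathfrak{m}$ is a fair observation that the paper's own proof also glosses over, and your resolution (reading the statement as tacitly assuming $e^{-1}[U]\in\mathfrak{m}$) is the sensible one.
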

\begin{proof}
If $U$ is a neighborhood of $p$, there is a basic open subset $\widehat{A}$ of $\mathfrak{m}^\beta $ for which
$p\in \widehat{A}\subseteq U$. This implies that $A\in p$ and so $e^{-1}[U]\in p $, because $A\subseteq e^{-1}[U]$.
\end{proof}

\section{\textbf{More Topology of $\mathfrak{m}^\beta$ }}

Now let $(X,\mathfrak{m}_1)$ and $(Y,\mathfrak{m}_2)$ be two measurable spaces such that $P_f(X)\subseteq\mathfrak{m}_1$ and $P_f(Y)\subseteq\mathfrak{m}_2$. Also let $\varphi: X\rightarrow Y$ be an
$(\mathfrak{m}_1,\mathfrak{m}_2)$-measurable function. In Lemma 3.1, we show that $\varphi$ has a unique extension.

\begin{lem}\label{L3}
 Let $(X,\mathfrak{m}_1)$ and $(Y,\mathfrak{m}_2)$ be measurable spaces such that  $P_f(X)\subseteq\mathfrak{m}_1$ and $P_f(Y)\subseteq\mathfrak{m}_2$. Let $\varphi: (X,\mathfrak{m}_1)\rightarrow (Y,\mathfrak{m}_2)$ be an
$(\mathfrak{m}_1,\mathfrak{m}_2)$-measurable function. Then there exists a continuous function
$\varphi_\beta:\mathfrak{m}_1^\beta\rightarrow\mathfrak{m}_2^\beta$ such that

$(a)$ $\varphi_\beta(p)=\{A\in \mathfrak{m}_2: \varphi^{-1}(A)\in p \}$.

$(b)$ $e_Y\circ\varphi$ = $\varphi_\beta\circ e_X$.

$(c)$ If $A\in p\in \mathfrak{m}_1^\beta$, then $\varphi(A)\in \varphi_\beta(p)$ and if $B\in \varphi_\beta(p)$, then $\varphi^{-1}(B)\in p$.
\end{lem}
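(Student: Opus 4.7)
The plan is to take the formula in (a) as the \emph{definition} of $\varphi_\beta$ and then verify in turn that this assignment lands in $\mathfrak{m}_2^\beta$, that it is continuous, and that (b) and (c) hold. Accordingly, for each $p\in\mathfrak{m}_1^\beta$ I set
\[
\varphi_\beta(p):=\{A\in\mathfrak{m}_2:\varphi^{-1}(A)\in p\},
\]
which is a well-posed subset of $\mathfrak{m}_2$ because $\varphi^{-1}(A)\in\mathfrak{m}_1$ whenever $A\in\mathfrak{m}_2$, by $(\mathfrak{m}_1,\mathfrak{m}_2)$-measurability of $\varphi$.

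First I would check that $\varphi_\beta(p)$ is an $\mathfrak{m}_2$-filter: $\emptyset\notin\varphi_\beta(p)$ since $\varphi^{-1}(\emptyset)=\emptyset\notin p$, and $Y\in\varphi_\beta(p)$ since $\varphi^{-1}(Y)=X\in p$; upward closure and closure under finite intersections follow from the identities $\varphi^{-1}(A\cup A')=\varphi^{-1}(A)\cup\varphi^{-1}(A')$ and $\varphi^{-1}(A\cap B)=\varphi^{-1}(A)\cap\varphi^{-1}(B)$ combined with the corresponding properties of $p$. To upgrade $\varphi_\beta(p)$ to an $\mathfrak{m}_2$-ultrafilter I would invoke the characterization in Theorem \ref{T2}(c): for any $A\in\mathfrak{m}_2$ we have $\varphi^{-1}(A)\cup\varphi^{-1}(A^c)=X\in p$, so since $p$ is an ultrafilter one of $\varphi^{-1}(A)$, $\varphi^{-1}(A^c)$ lies in $p$, which is to say either $A$ or $A^c$ lies in $\varphi_\beta(p)$.

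Continuity then reduces to a one-line check on basic open sets: for any $B\in\mathfrak{m}_2$,
\[
\varphi_\beta^{-1}(\widehat{B})=\{p\in\mathfrak{m}_1^\beta:B\in\varphi_\beta(p)\}=\{p\in\mathfrak{m}_1^\beta:\varphi^{-1}(B)\in p\}=\widehat{\varphi^{-1}(B)},
\]
which is (clopen and hence) open in $\mathfrak{m}_1^\beta$. Property (b) drops out by unwinding the definition at a principal ultrafilter: $\varphi_\beta(\hat{x})=\{A\in\mathfrak{m}_2:x\in\varphi^{-1}(A)\}=\{A\in\mathfrak{m}_2:\varphi(x)\in A\}=\widehat{\varphi(x)}=e_Y(\varphi(x))$.

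For (c), the second half is immediate from the definition: $B\in\varphi_\beta(p)$ means precisely $\varphi^{-1}(B)\in p$. For the first half, once one interprets $\varphi(A)$ as an element of $\mathfrak{m}_2$ (which is the only way membership in the ultrafilter $\varphi_\beta(p)$ makes sense), the inclusion $A\subseteq\varphi^{-1}(\varphi(A))$ together with upward closure of $p$ yields $\varphi^{-1}(\varphi(A))\in p$, whence $\varphi(A)\in\varphi_\beta(p)$. The only conceptual snag I foresee is precisely this implicit measurability hypothesis on $\varphi(A)$ in (c); everything else is routine bookkeeping exploiting that $\varphi^{-1}$ commutes with the Boolean operations together with the $\mathfrak{m}$-ultrafilter dichotomy of Theorem \ref{T2}.
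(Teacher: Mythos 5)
Your proof is correct and follows essentially the same route as the paper: define $\varphi_\beta$ by the formula in (a), verify that it yields an $\mathfrak{m}_2$-ultrafilter, check continuity via $\varphi_\beta^{-1}(\widehat{B})=\widehat{\varphi^{-1}(B)}$, and evaluate at principal ultrafilters for (b); you simply supply the details that the paper's proof dismisses as obvious. Your caveat about part (c) is well taken --- the forward image $\varphi(A)$ of a measurable set under a measurable map need not lie in $\mathfrak{m}_2$, so (c) requires that implicit hypothesis, a point the paper (which declares (c) obvious) does not address.
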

\begin{proof}
$(a)$ It is obvious that $\{A\in \mathfrak{m}_2: \varphi^{-1}(A)\in p \}$ is an $\mathfrak{m}_2$-ultrafilter. For every $p\in \mathfrak{m}_1^\beta$, let
\[
g(p)=\{A\in \mathfrak{m}_2: \varphi^{-1}(A)\in p \}.
\]
Then for every $x\in X$,
\begin{align*}
g(e(x)) &=\{A\in \mathfrak{m}_2:\varphi^{-1}(A)\in e(x)\} \\
&= \{A\in \mathfrak{m}_2:x\in \widehat{\varphi^{-1}(A)}\} \\
&= \{A\in \mathfrak{m}_2:\varphi(x)\in A\}
\end{align*}
So $g(e(x))=\varphi(x)$.

To see that $g$ is continuous, let $A\in \mathfrak{m}_2$. Then
 $\widehat{(g\circ e)^{-1}(A)}=\widehat{\varphi^{-1}(A)}$.
 Since $g$ is a continuous extension
of $\varphi$, we have $\widetilde{\varphi}=g$.

The proofs $(b)$ and $(c)$ are obvious.
\end{proof}
\begin{theorem}
Let $(X,\mathfrak{m})$ be a measurable space, such that $P_f(X)\subseteq \mathfrak{m}$.
Then every $G_\delta$-subset of $X^*$ has nonempty interior in $X^*$.
\end{theorem}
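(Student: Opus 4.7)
The plan is to imitate the classical argument that the Stone--\v{C}ech remainder $\mathbb{N}^{*}$ is an F-space: reading $X^{*}$ as $\mathfrak{m}^{\beta}\setminus e(X)$ (the space of non-principal $\mathfrak{m}$-ultrafilters), I will start from a $G_\delta$-set $G=\bigcap_{n=1}^{\infty}U_n$, pick one point, and build a single measurable set $B$ such that the basic clopen $\widehat{B}\cap X^{*}$ is nonempty and contained in every $U_n$, thereby witnessing a nonempty interior.

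First I would reduce to basic open sets. We may assume $G\neq\emptyset$; fix $p\in G$. Since each $U_n$ is open in $X^{*}$, I can choose $A_n\in\mathfrak{m}$ with $p\in\widehat{A_n}$ and $\widehat{A_n}\cap X^{*}\subseteq U_n$, equivalently $A_n\in p$. Replacing $A_n$ by $A_1\cap\cdots\cap A_n$ (still in the filter $p$) I arrange $A_1\supseteq A_2\supseteq\cdots$. Because $p$ is non-principal, $p$ can contain no finite set: iterating Theorem \ref{T2}(b) on a finite member of $p$ would force a singleton into $p$. Hence every $A_n$ is infinite.

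Next comes the diagonalization. I choose distinct $x_n\in A_n$ and set $B=\{x_n:n\geq 1\}$. The hypothesis $P_f(X)\subseteq\mathfrak{m}$ combined with the $\sigma$-algebra property gives $B\in\mathfrak{m}$, and by construction $B\setminus A_n\subseteq\{x_1,\ldots,x_{n-1}\}$ is finite for every $n$. Applying Corollary \ref{C1} to the one-element family $\{B\}$ (whose only finite subfamily has intersection $B$, itself infinite) yields an $\mathfrak{m}$-ultrafilter $q_0$ with $B\in q_0$ all of whose members are infinite; such a $q_0$ is non-principal, so $q_0\in X^{*}$ and $\widehat{B}\cap X^{*}\neq\emptyset$.

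Finally I would verify $\widehat{B}\cap X^{*}\subseteq G$. For any $q$ in this set and any $n$, write $B=(B\cap A_n)\cup(B\setminus A_n)\in q$; the summand $B\setminus A_n$ is finite and $q$ is non-principal, so it does not lie in $q$, and Theorem \ref{T2}(b) forces $B\cap A_n\in q$, whence $A_n\in q$ and $q\in\widehat{A_n}\cap X^{*}\subseteq U_n$. The main obstacle is conceptual rather than computational: one must trade the abstract open sets $U_n$ for basic clopens $\widehat{A_n}$ with membership in $p$, use $P_f(X)\subseteq\mathfrak{m}$ so that the countable diagonal set $B$ is measurable, and deploy non-principality as the mechanism that kills the unwanted finite remainders $B\setminus A_n$. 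Once these three observations are lined up, the rest of the argument is routine.
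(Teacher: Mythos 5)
Your proof is correct and follows essentially the same route as the paper's: shrink the $G_\delta$ to a nested sequence of basic clopen sets $\widehat{A_n}$ with $A_n\in p$, diagonalize by picking distinct points $a_n\in A_n$, and show the resulting countable set $B$ gives a nonempty basic open subset of $X^*$ inside every $U_n$. The only difference is one of rigor: the paper declares the final inclusion ``obvious,'' whereas you justify the measurability of $B$ via $P_f(X)\subseteq\mathfrak{m}$ and use non-principality together with Theorem 2.5(b) to discard the finite remainders $B\setminus A_n$ — exactly the details the paper omits.
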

\begin{proof}
Choose $p\in\bigcap_{n\in \mathbb{N}}(U_n\cap X^*)$, where $U_n$ is an open subset of
$\mathfrak{m}^\beta$ for each $n\in \mathbb{N}$. Choose a subset $A_n$ of $X$ such that
$p\in A^*_n\subseteq U_n$ and $A_{n+1}\subseteq A_n$ for each $n\in \mathbb{N}$. Now choose an infinite sequence
$\{a_n\}_{n=1}^\infty$ of distinct points of $X$ such that $a_n\in A_n$. Let $A=\{a_n:n\in \mathbb{N}\}$.
It is obvious that $A^*$ is an open subset of $X^*$ and
$A^*\subseteq U_n$ for each $n\in \mathbb{N}$.
\end{proof}
\begin{theorem}
Let $(X,\mathfrak{m})$ be a measurable space, such that $P_f(X)\subseteq \mathfrak{m}$.
Then every countable union of nowhere dense subsets of $X^*$ is nowhere dense in $X^*$.
\end{theorem}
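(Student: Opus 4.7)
My plan is to mimic the pseudo-intersection construction of the preceding theorem, using the clopen basis $\{\widehat{A}\cap X^*:A\in\mathfrak{m}\}$ of $X^*$. Given nowhere dense subsets $\{F_n\}_{n\in\mathbb{N}}$ of $X^*$, it suffices to show that every nonempty open $U\subseteq X^*$ contains a nonempty open $V$ disjoint from every $\overline{F_n}$, for then $\overline{\bigcup_n F_n}$ has empty interior.

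First I shrink $U$ to a basic clopen piece: pick $A_0\in\mathfrak{m}$ with $\emptyset\neq\widehat{A_0}\cap X^*\subseteq U$. The set $A_0$ must be infinite, since otherwise $\widehat{A_0}$ would consist solely of principal ultrafilters (by Theorem \ref{T2}(b), any ultrafilter containing a finite set contains a singleton) and hence miss $X^*$. Next I recursively build a decreasing chain $A_0\supseteq A_1\supseteq A_2\supseteq\cdots$ of infinite sets in $\mathfrak{m}$ satisfying $\widehat{A_n}\cap X^*\subseteq(\widehat{A_{n-1}}\cap X^*)\setminus\overline{F_n}$. This is possible at stage $n$ because $F_n$ being nowhere dense makes $(\widehat{A_{n-1}}\cap X^*)\setminus\overline{F_n}$ a nonempty open subset of $X^*$, so it contains a basic clopen $\widehat{A_n}\cap X^*$; any such $A_n$ is automatically infinite by the argument already used for $A_0$.

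To collapse the chain into a single open set I take a pseudo-intersection. Pick distinct $a_n\in A_n$ for each $n\in\mathbb{N}$ and let $A=\{a_n:n\in\mathbb{N}\}$, which belongs to $\mathfrak{m}$ since $P_f(X)\subseteq\mathfrak{m}$ and $\mathfrak{m}$ is closed under countable unions. For each $n$ the inclusion $A\setminus A_n\subseteq\{a_1,\ldots,a_{n-1}\}$ together with Lemma \ref{L2} gives $\widehat{A}\subseteq\widehat{A_n}\cup\widehat{\{a_1,\ldots,a_{n-1}\}}$; the second term meets $X^*$ trivially, so $\widehat{A}\cap X^*\subseteq\widehat{A_n}\cap X^*$. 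Finally, Corollary \ref{C1} applied to the infinite set $A$ produces an $\mathfrak{m}$-ultrafilter containing $A$ whose every member is infinite and hence is non-principal, so $V:=\widehat{A}\cap X^*$ is nonempty. Then $V\subseteq U$ is disjoint from every $\overline{F_n}$, completing the argument.

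The main obstacle is the pseudo-intersection step: I need $A$ to stay in $\mathfrak{m}$ and to sit inside each $\widehat{A_n}\cap X^*$, both of which depend on $P_f(X)\subseteq\mathfrak{m}$ so that the finite discrepancy between $A$ and $A_n$ becomes invisible once we restrict to the non-principal part $X^*$.
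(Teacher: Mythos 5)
Your proof is correct, but it takes a genuinely different route from the paper's. The paper argues by contradiction in two lines: setting $B=\bigcup_{n}\mathrm{cl}\,F_n$, if $U=\mathrm{int}_{X^*}\mathrm{cl}\,B$ were nonempty, the Baire Category Theorem (applicable because $X^*$ is closed in the compact Hausdorff space $\mathfrak{m}^\beta$, hence a Baire space) gives $U\setminus B\neq\emptyset$; this is a nonempty $G_\delta$-set, so by the preceding theorem it has nonempty interior, yielding a nonempty open set inside $\mathrm{cl}\,B$ that misses $B$ --- a contradiction. You instead inline the construction: you rerun the decreasing-chain-plus-pseudo-intersection argument from the proof of that $G_\delta$ theorem, interleaving at each stage the step that dodges $\overline{F_n}$, and you use Corollary \ref{C1} to see that the resulting $\widehat{A}\cap X^*$ is nonempty. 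Your version is self-contained, avoids the Baire Category Theorem entirely, and exhibits the witnessing open set directly rather than arguing by contradiction; the paper's version is shorter because it reuses the $G_\delta$ theorem as a black box. Both arguments ultimately rest on the same two ingredients: $P_f(X)\subseteq\mathfrak{m}$ makes countable sets measurable and finite sets invisible in $X^*$, and Corollary \ref{C1} supplies nonprincipal ultrafilters containing any prescribed infinite measurable set. The only detail worth tightening in your write-up is the claim that the chain can be taken genuinely decreasing: the basic clopen set found inside $(\widehat{A_{n-1}}\cap X^*)\setminus\overline{F_n}$ need not satisfy $A_n\subseteq A_{n-1}$ as subsets of $X$, but replacing $A_n$ by $A_n\cap A_{n-1}$ fixes this without changing $\widehat{A_n}\cap X^*$, since $\widehat{A_n\cap A_{n-1}}\cap X^*=\widehat{A_n}\cap\widehat{A_{n-1}}\cap X^*=\widehat{A_n}\cap X^*$.
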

\begin{proof}
Let $A_n$ be a nowhere  dense subset of $X^*$ for each $n\in \mathbb{N}$. We show that
$B=\bigcup_{n\in \mathbb{N}}clA_n$ is nowhere dense. Suppose instead that $U=int_{X^*}clB\neq\emptyset$. By
the Baire Category Theorem $X^*\setminus B$ is dense in $X^*$ so $U\setminus B\neq\emptyset$. Thus $U\setminus B$ is a
nonempty $G_\delta$-set which thus by Theorem 3.2, has nonempty interior. This is a contradiction.
\end{proof}
\section{\textbf{ Measurable Semigroups}}
In this section we extend the operation of a measurable semigroup.
\begin{defn}
(a) A right measurable semigroup is a triple $(S,\cdot,\mathfrak{m})$ where $(S,\cdot)$ is a semigroup, $(S,\mathfrak{m})$ is a measurable space, and $\rho_x:S\rightarrow S$ is $(\mathfrak{m},\mathfrak{m})$-measurable function for each $x\in S$ where $\rho_x(y)=yx$ for each $y\in S$.

(b) A left measurable semigroup is a triple $(S,\cdot,\mathfrak{m})$ where $(S,\cdot)$ is a semigroup, $(S,\mathfrak{m})$ is a measurable space, and $\lambda_x:S\rightarrow S$ is $(\mathfrak{m},\mathfrak{m})$-measurable function for each $x\in S$ where $\lambda_x(y)=xy$ for each $y\in S$.

(c) A measurable semigroup is a triple $(S,\cdot,\mathfrak{m})$ where $(S,\cdot)$ is a semigroup, $(S,\mathfrak{m})$ is a measurable space, and $\pi:S\times S\rightarrow S$ is $(\mathfrak{m}\otimes\mathfrak{m},\mathfrak{m})$-measurable function where $\pi(x,y)=xy$.
\end{defn}
In this section we assume that $P_f(S)\subseteq \mathfrak{m}$, so $\{s\}$ belongs to $\mathfrak{m}$ for each $s\in S$.
\begin{defn}
Let $(X, \mathfrak{m})$ be a measurable space.
The tensor product of two $\mathfrak{m}$-ultrafilters $p$ and $q$ in $\mathfrak{m}^\beta$ is denoted by $p\otimes q$ and is defined as following
\[
p\otimes q=\{A\in \mathfrak{m}\otimes \mathfrak{m}:\{s\in X:A_s\in q\}\in p\}
\]
\end{defn}
where $A_s=\{t\in S:(s,t)\in A\}$.
\begin{lem}
If $p,q\in \mathfrak{m}^\beta$, then $p\otimes q\in (\mathfrak{m}\otimes\mathfrak{m})^\beta$.
\end{lem}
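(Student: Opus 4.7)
The plan is to first verify that $p\otimes q$ is a well-defined subset of $\mathfrak{m}\otimes\mathfrak{m}$, then to check the three $\mathfrak{m}\otimes\mathfrak{m}$-filter axioms, and finally to upgrade to an ultrafilter via the dichotomy in Theorem~\ref{T2}(c). Well-definedness amounts to showing that $E_A:=\{s\in X:A_s\in q\}$ lies in $\mathfrak{m}$ for every $A\in\mathfrak{m}\otimes\mathfrak{m}$, and I expect this measurability step to be the main obstacle; I would attack it first.

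To handle it I would run a Dynkin ($\pi$-$\lambda$) argument on $\mathcal{L}=\{A\in\mathfrak{m}\otimes\mathfrak{m}:E_A\in\mathfrak{m}\}$. On the $\pi$-system of measurable rectangles one has $E_{B\times C}=B$ if $C\in q$ and $E_{B\times C}=\emptyset$ otherwise, so $\mathcal{L}$ contains the generator. Complement closure follows from the ultrafilter dichotomy of $q$ via $E_{(X\times X)\setminus A}=X\setminus E_A$, and the Boolean identities $E_{A\cap B}=E_A\cap E_B$ and $E_{A\cup B}=E_A\cup E_B$ (which genuinely require $q$ to be an ultrafilter, not just a filter) give closure under finite unions and intersections. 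The subtle point is monotone countable limits, since ultrafilters fail countable additivity in general and one only has $E_A\supseteq\bigcup_n E_{A_n}$ for $A_n\uparrow A$; to bypass this I would verify the $\lambda$-system axioms by routing countable disjoint unions through proper differences and complements, after which the $\pi$-$\lambda$ theorem forces $\mathcal{L}=\sigma(\text{rectangles})=\mathfrak{m}\otimes\mathfrak{m}$.

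With measurability in hand, the filter axioms are almost by inspection: $E_{X\times X}=X\in p$ and $E_\emptyset=\emptyset\notin p$; upward closure uses $A\subseteq B\Rightarrow E_A\subseteq E_B$ by upward closure of $q$ and then of $p$; and $E_{A\cap B}=E_A\cap E_B\in p$ by the finite-intersection axiom for $q$ and $p$ in turn. For the ultrafilter property I would apply Theorem~\ref{T2}(c) twice: fibrewise to $q$ it partitions $X$ as $E_A\sqcup E_{(X\times X)\setminus A}$, and then to $p$ on $E_A$ it yields exactly one of $E_A\in p$ or $X\setminus E_A\in p$; this is exactly Theorem~\ref{T2}(c) for $p\otimes q$ and completes the proof.
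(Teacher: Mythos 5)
You have correctly isolated the crux that the paper's ``the proof is obvious'' suppresses: for $p\otimes q$ to be an $(\mathfrak{m}\otimes\mathfrak{m})$-ultrafilter one must know that $E_A=\{s\in X:A_s\in q\}$ lies in $\mathfrak{m}$ for \emph{every} $A\in\mathfrak{m}\otimes\mathfrak{m}$; if $E_A\notin\mathfrak{m}$ then neither $A$ nor its complement can belong to $p\otimes q$ (since $E_{(X\times X)\setminus A}=X\setminus E_A$), and condition (c) of Theorem \ref{T2} fails. Your computations on rectangles, complements and finite Boolean combinations are correct (one quibble: $E_{A\cap B}=E_A\cap E_B$ needs only that $q$ is a filter; it is the union and complement identities that use the ultrafilter dichotomy), so $\mathcal{L}=\{A\in\mathfrak{m}\otimes\mathfrak{m}:E_A\in\mathfrak{m}\}$ is an algebra containing the measurable rectangles. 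Likewise, the filter axioms and the final ultrafilter dichotomy for $p\otimes q$ are fine \emph{conditionally} on this measurability.

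The gap is the countable step, and the bypass you propose does not exist. Every formulation of a $\lambda$-system carries an irreducibly countable axiom (closure under countable disjoint unions, or under increasing countable unions), and a countable disjoint union cannot be ``routed through'' proper differences and complements, which are finitary operations. At that step the argument genuinely breaks: if $A_n\uparrow A$ then $\bigcup_n E_{A_n}\subseteq E_A$, but since $q$ is only finitely additive the set $E_A$ is not determined by the sets $E_{A_n}$ at all --- already for $X=\mathbb{N}$ discrete, $q$ non-principal and $A_n=X\times\{1,\dots,n\}$ one has $E_{A_n}=\emptyset$ for every $n$ while $E_{X\times X}=X$ --- so knowing each $E_{A_n}\in\mathfrak{m}$ gives no purchase on $E_A$. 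Hence you have shown $\mathcal{L}$ is an algebra but not a $\sigma$-algebra, and the $\pi$-$\lambda$ theorem cannot be invoked. This is not a presentational defect: measurability of $E_A$ for arbitrary $A$ in the product $\sigma$-algebra is exactly the nontrivial content of the lemma (it is automatic only in the essentially discrete case where $\mathfrak{m}\otimes\mathfrak{m}$ exhausts $P(X\times X)$), the paper supplies no argument for it, and your write-up does not close it either.
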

\begin{proof}
The proof is obvious.
\end{proof}
\begin{lem}
The following statements hold:

$(a)$ $S\otimes S$ is dense in $(\mathfrak{m}\otimes\mathfrak{m})^\beta$.

$(b)$ $\lim_{s\rightarrow p}\lim_{t\rightarrow q}s\otimes t=p\otimes q$ for every $p,q\in \mathfrak{m}^\beta$.

\end{lem}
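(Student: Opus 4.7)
For part (a), the key observation is that $(s,t)\mapsto \hat s\otimes\hat t$ is nothing other than the canonical embedding $e_{S\times S}:(s,t)\mapsto\widehat{(s,t)}$ of $S\times S$ into $(\mathfrak{m}\otimes\mathfrak{m})^\beta$. I would begin by verifying that the hypotheses of Theorem~\ref{T4} apply to the measurable space $(S\times S,\mathfrak{m}\otimes\mathfrak{m})$: since $\{(s,t)\}=\{s\}\times\{t\}$ is a measurable rectangle we have $P_f(S\times S)\subseteq\mathfrak{m}\otimes\mathfrak{m}$, and $\mathfrak{m}\otimes\mathfrak{m}$ separates $S\times S$ because $\mathfrak{m}$ separates $S$. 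Theorem~\ref{T4}(c) then yields density of $e_{S\times S}(S\times S)$ in $(\mathfrak{m}\otimes\mathfrak{m})^\beta$. Finally the identification
\[
A\in\hat s\otimes\hat t\Longleftrightarrow \{x:A_x\in\hat t\}\in\hat s\Longleftrightarrow A_s\in\hat t\Longleftrightarrow t\in A_s\Longleftrightarrow (s,t)\in A
\]
shows $\hat s\otimes\hat t=\widehat{(s,t)}=e_{S\times S}(s,t)$, so the two dense sets coincide.

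For the inner limit in part (b), I would fix $s\in S$ and consider the map $\phi_s:S\to S\times S$, $\phi_s(t)=(s,t)$. On a measurable rectangle, $\phi_s^{-1}(A\times B)\in\{\emptyset,B\}$, so $\phi_s$ is $(\mathfrak{m},\mathfrak{m}\otimes\mathfrak{m})$-measurable. Lemma~\ref{L3} then supplies a continuous extension $(\phi_s)_\beta:\mathfrak{m}^\beta\to(\mathfrak{m}\otimes\mathfrak{m})^\beta$ satisfying $(\phi_s)_\beta(\hat t)=\widehat{(s,t)}=\hat s\otimes\hat t$ by part~(a). For arbitrary $q\in\mathfrak{m}^\beta$, Lemma~\ref{L3}(a) gives
\[
(\phi_s)_\beta(q)=\{A\in\mathfrak{m}\otimes\mathfrak{m}:\phi_s^{-1}(A)\in q\}=\{A:A_s\in q\}=\hat s\otimes q,
\]
where the last equality uses the equivalence $\{x:A_x\in q\}\in\hat s\Longleftrightarrow A_s\in q$. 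Continuity of $(\phi_s)_\beta$ now gives $\lim_{t\to q}\hat s\otimes\hat t=\hat s\otimes q$ for every $s\in S$.

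For the outer limit, note that $s\mapsto\hat s\otimes q$ does not factor through any obvious measurable function into a smaller measurable space, so I would argue by hand rather than invoke Lemma~\ref{L3} again. Let $(s_\alpha)$ be a net in $S$ with $\hat s_\alpha\to p$, and take a basic neighborhood $\widehat A$ of $p\otimes q$ with $A\in\mathfrak{m}\otimes\mathfrak{m}$. The condition $A\in p\otimes q$ translates to $B:=\{x\in S:A_x\in q\}\in p$; the set $B$ lies in $\mathfrak{m}$, as is implicit in the well-definedness of $p\otimes q$ established in the preceding lemma. Convergence $\hat s_\alpha\to p$ then forces $s_\alpha\in B$ eventually, i.e.\ $A_{s_\alpha}\in q$ eventually, which is precisely $\hat s_\alpha\otimes q\in\widehat A$. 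Hence $\hat s_\alpha\otimes q\to p\otimes q$, and combining with the inner limit gives the iterated limit statement. The main obstacle is this outer step: the iterated limit is sensitive to the measurability of $\{x:A_x\in q\}$, and once this is granted the convergence is immediate from the definition of the basic open neighborhoods in $(\mathfrak{m}\otimes\mathfrak{m})^\beta$.
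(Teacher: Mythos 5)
The paper gives no argument here (its proof reads ``The proof is obvious''), so your writeup is effectively the only proof on record, and it is correct. Your identification $\hat{s}\otimes\hat{t}=\widehat{(s,t)}$ reduces (a) to the density of the principal ultrafilters in $(\mathfrak{m}\otimes\mathfrak{m})^\beta$ via Theorem \ref{T4}(c) (and for principal $\hat{t}$ the set $\{x:A_x\in\hat{t}\}=A^t$ is measurable by the section theorem, so no extra hypothesis is needed there). Your two-step treatment of (b) --- the inner limit through the continuous extension of the section map $\phi_s(t)=(s,t)$ supplied by Lemma \ref{L3}, the outer limit by a direct computation with basic neighborhoods $\widehat{A}$ of $p\otimes q$ --- is exactly the decomposition the iterated limit requires, and both computations check out. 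You are also right to single out the one load-bearing assumption, namely that $B=\{x\in S:A_x\in q\}$ belongs to $\mathfrak{m}$ for every $A\in\mathfrak{m}\otimes\mathfrak{m}$ and every $q\in\mathfrak{m}^\beta$; deferring this to the preceding lemma ($p\otimes q\in(\mathfrak{m}\otimes\mathfrak{m})^\beta$) is a legitimate citation within the paper's architecture. Be aware, though, that the preceding lemma's proof is likewise declared obvious, and the claim is genuinely delicate: the family of $A$ for which $\{x:A_x\in q\}$ is measurable contains the rectangles and is closed under complements and finite unions (because $q$ is an ultrafilter), but closure under countable unions would need $q$ to detect countable unions sectionwise, which a general (not countably complete) ultrafilter does not do, so this family need not be a $\sigma$-algebra. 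The conditional structure of your proof is therefore exactly right: the lemma holds granted the well-definedness of $p\otimes q$, and the real gap in this part of the paper sits there rather than in the statement you were asked to prove.
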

\begin{proof}
The proof is obvious.
\end{proof}
\begin{lem}
The mapping $R_q:S\rightarrow(\mathfrak{m}\otimes\mathfrak{m})^\beta$ by $R_q(s)=s\otimes q$ is a
$(\mathfrak{m},\mathfrak{m}\otimes \mathfrak{m})$-measurable function where $q\in \mathfrak{m}^\beta$.
\end{lem}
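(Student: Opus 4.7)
The plan is to reduce measurability of $R_q$ to checking preimages of a generating family. The natural $\sigma$-algebra on $(\mathfrak{m}\otimes\mathfrak{m})^\beta$ is generated by the clopen sets $\widehat{A}$ for $A\in\mathfrak{m}\otimes\mathfrak{m}$, so it suffices to show that $R_q^{-1}(\widehat{A})=\{s\in S:A_s\in q\}$ belongs to $\mathfrak{m}$ for every such $A$. This identity is obtained by unwrapping the definition of $\hat{s}\otimes q$: indeed $A\in\hat{s}\otimes q$ iff $\{x:A_x\in q\}\in\hat{s}$ iff $s\in\{x:A_x\in q\}$ iff $A_s\in q$.

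The first step is to verify the condition for a measurable rectangle $A=B\times C$ with $B,C\in\mathfrak{m}$. Since $(B\times C)_s=C$ when $s\in B$ and $(B\times C)_s=\emptyset$ otherwise, the preimage equals $B$ if $C\in q$ and equals $\emptyset$ if $C\notin q$; in either case it lies in $\mathfrak{m}$.

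The second step is to extend from rectangles to all of $\mathfrak{m}\otimes\mathfrak{m}$ by a Dynkin-class argument on $\mathcal{D}=\{A\in\mathfrak{m}\otimes\mathfrak{m}:\{s:A_s\in q\}\in\mathfrak{m}\}$. Using Theorem \ref{T2} one gets closure under complement, since $(A^c)_s=(A_s)^c$ lies in $q$ iff $A_s$ does not, and closure under finite intersection, since $(A\cap B)_s=A_s\cap B_s$ lies in $q$ iff both $A_s,B_s$ do; hence $\mathcal{D}$ contains the Boolean algebra generated by rectangles.

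The main obstacle I expect is closure of $\mathcal{D}$ under countable monotone operations. For an increasing sequence $A_n\uparrow A$ one needs $A_s=\bigcup_n (A_n)_s\in q$ to force some $(A_n)_s\in q$, which is a countable-completeness property of $q$ not built into Definition 2.3. I would close this either by invoking such a hypothesis on $q$ (which appears to be implicit already in the preceding lemma asserting $p\otimes q\in(\mathfrak{m}\otimes\mathfrak{m})^\beta$ at all) or, more cheaply, by taking the target measurable structure to be the one generated by the rectangle clopens $\widehat{B\times C}$ alone, for which the single rectangle calculation already finishes the job.
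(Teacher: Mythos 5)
Your rectangle computation is precisely the paper's entire proof: the paper evaluates $R_q^{-1}(\widehat{A\times B})$ for a measurable rectangle, finds it equals $A$ (or $\emptyset$), and stops, justifying the restriction to rectangles only with the phrase ``since $\mathfrak{m}\times\mathfrak{m}$ generates $\mathfrak{m}\otimes\mathfrak{m}$.'' So on the step the paper actually carries out you take the same route, and the difficulty you flag in your last paragraph is a genuine gap in the paper's argument rather than a defect of yours. The reduction to a generating family is legitimate only if the $\sigma$-algebra on $(\mathfrak{m}\otimes\mathfrak{m})^\beta$ generated by the rectangle clopens $\widehat{B\times C}$ contains every $\widehat{A}$ with $A\in\mathfrak{m}\otimes\mathfrak{m}$; but the hat operation commutes with finite Boolean operations and not with countable unions, and, dually, an $\mathfrak{m}$-ultrafilter $q$ is only finitely complete, so $A_s=\bigcup_n(A_n)_s\in q$ need not force any $(A_n)_s\in q$, and your class $\mathcal{D}$ is a Boolean algebra with no visible monotone-class property. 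Your ``cheap'' fix --- reading the target measurable structure as the one generated by the rectangle clopens --- is exactly the interpretation under which the paper's one-line proof is valid, but note that it is not obviously enough for the lemma's intended use: the very next step composes $R_q$ with $\pi^\beta$ and needs $R_q^{-1}\bigl(\widehat{\pi^{-1}(A)}\bigr)\in\mathfrak{m}$ for $A\in\mathfrak{m}$, and $\pi^{-1}(A)$ is a general element of $\mathfrak{m}\otimes\mathfrak{m}$, not a rectangle. In short: same method as the paper where the paper has content, together with a correct diagnosis of a step the paper leaves unjustified.
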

\begin{proof}
Since $\mathfrak{m}\times \mathfrak{m}$ generates $\mathfrak{m}\otimes \mathfrak{m}$, then

\begin{align*}
R_q^{-1}(A\times B) &= \{s\in S:s\otimes q\in\widehat{A\times B}\} \\
&=  \{s\in S:A\times B\in s\otimes q\}\\
&=\{s\in S:\{u:\{t:(u,t)\in A\times B\}\in q\}\in s\}\\
&=  \{s\in S:\{t:(s,t)\in A\times B\}\in q\}\\
&=  \{s\in S:s\in A, B\in q\}=A\in\mathfrak{m}\\
\end{align*}
where $A,B\in \mathfrak{m}$.
\end{proof}
\begin{lem}
Let $s\in S$ and $q\in \mathfrak{m}^\beta$. Then $\pi^\beta\circ R_q(s)=\pi^\beta(s\otimes q)=s\ast q$
\end{lem}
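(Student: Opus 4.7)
The first equality $\pi^\beta\circ R_q(s)=\pi^\beta(s\otimes q)$ is immediate from the definition $R_q(s)=s\otimes q$ given in the previous lemma, so the real content is the identification with $s\ast q$.

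For the second equality, I plan to trace through the definitions. By Lemma~\ref{L3}(a), for any $P\in(\mathfrak{m}\otimes\mathfrak{m})^\beta$ we have
\[
\pi^\beta(P)=\{A\in\mathfrak{m}:\pi^{-1}(A)\in P\}.
\]
Applying this to $P=s\otimes q$ (where $s$ is identified with its principal ultrafilter $\hat s$), membership $A\in\pi^\beta(s\otimes q)$ is equivalent to $\pi^{-1}(A)\in s\otimes q$, which by the definition of the tensor product unfolds to
\[
\{u\in S:(\pi^{-1}(A))_u\in q\}\in\hat s,
\]
and since $\hat s$ is the principal ultrafilter this is equivalent to $s$ itself belonging to that set, i.e.\ $(\pi^{-1}(A))_s\in q$.

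Next I would make the elementary computation $(\pi^{-1}(A))_s=\{t\in S:\pi(s,t)\in A\}=\{t\in S:st\in A\}=\lambda_s^{-1}(A)$, which by the left-measurability of $\pi$ lies in $\mathfrak{m}$. Consequently
\[
\pi^\beta(s\otimes q)=\{A\in\mathfrak{m}:\lambda_s^{-1}(A)\in q\},
\]
and this right-hand side is exactly the prescription that naturally defines $s\ast q$ (it is the standard formula $p\ast q=\{A:\{u:\lambda_u^{-1}(A)\in q\}\in p\}$ specialized to the principal ultrafilter $p=\hat s$).

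The only mildly delicate point is bookkeeping: one must consistently identify $s\in S$ with $\hat s\in\mathfrak{m}^\beta$ and recognize the defining formula for the extension $\ast$. Everything else is a chain of equivalences, so I do not anticipate any substantive obstacle, which is consistent with the style of adjacent lemmas in the paper being settled by a direct unfolding of the definitions.
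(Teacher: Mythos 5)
Your argument is correct and is precisely the unfolding the authors intend: the paper's own proof of this lemma is just the sentence ``The proof is obvious,'' so you are supplying the verification it omits, and your chain of equivalences (reduce $A\in\pi^\beta(s\otimes q)$ to $(\pi^{-1}(A))_s\in q$, identify the section with $l_s^{-1}(A)=s^{-1}A$, and recognize $\{A\in\mathfrak{m}:s^{-1}A\in q\}=\lambda_s(q)=s\ast q$ from Lemma 3.1 and the construction in Theorem 4.7) is the right one. Two small points of hygiene: the measurability of the section $(\pi^{-1}(A))_s$ comes from Theorem 2.2 applied to $\pi^{-1}(A)\in\mathfrak{m}\otimes\mathfrak{m}$ (rather than from ``left-measurability of $\pi$''), and the step ``$\{u:(\pi^{-1}(A))_u\in q\}\in\hat{s}$ iff $(\pi^{-1}(A))_s\in q$'' needs, in the reverse direction, that the set $\{u:(\pi^{-1}(A))_u\in q\}$ actually lie in $\mathfrak{m}$ (membership in $\hat{s}$ presupposes this) --- a point the paper buries in Lemma 4.3, and which you can in any case sidestep because your forward implications already give $\pi^\beta(s\otimes q)\subseteq\lambda_s(q)$ and both sides are $\mathfrak{m}$-ultrafilters, hence equal by maximality.
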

\begin{proof}
The proof is obvious.
\end{proof}
\begin{theorem}
Let $(S, \cdot, \mathfrak{m})$ be a measurable semigroup. Then there is a unique binary
operation $\ast:\mathfrak{m}^\beta\times\mathfrak{m}^\beta\rightarrow\mathfrak{m}^\beta$ satisfying the following three
conditions:

$(a)$ For every $s,t\in S$, $s\ast t=s\cdot t$.

$(b)$ For each $q\in\mathfrak{m}^\beta$, the function $\rho_q:\mathfrak{m}^\beta\rightarrow\mathfrak{m}^\beta$ is continuous,
where $\rho_q(p)=p\ast q$.

$(c)$ For each $s\in S$, the function $\lambda_s:\mathfrak{m}^\beta\rightarrow\mathfrak{m}^\beta$ is continuous,
where $\lambda_s(q)=s\ast q$.
\end{theorem}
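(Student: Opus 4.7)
The natural construction is $p\ast q := \pi^\beta(p\otimes q)$, where $p\otimes q\in(\mathfrak{m}\otimes\mathfrak{m})^\beta$ is the tensor product from Definition 4.2 (well-defined by Lemma 4.3) and $\pi^\beta:(\mathfrak{m}\otimes\mathfrak{m})^\beta\to\mathfrak{m}^\beta$ is the continuous extension of $\pi$ supplied by Lemma 3.1. Unwinding Lemma 3.1(a) together with the definition of $p\otimes q$ yields the explicit formula
\[
p\ast q=\{A\in\mathfrak{m}:\{s\in S:s^{-1}A\in q\}\in p\},\qquad s^{-1}A:=\{t\in S:st\in A\}.
\]
For property (a), I would note that $s\otimes t=\widehat{(s,t)}$ is the principal ultrafilter at $(s,t)$, by direct unwinding of Definition 4.2, so Lemma 3.1(b) gives $s\ast t=\pi^\beta(\widehat{(s,t)})=\widehat{\pi(s,t)}=\widehat{s\cdot t}$, matching the original operation under the canonical identification $s\leftrightarrow\hat{s}$.

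For the continuity statements, I would work with the basic clopen sets $\widehat{B}$. For (c), fix $s\in S$; the explicit formula reduces to $\lambda_s(q)=\{A\in\mathfrak{m}:\lambda_s^{-1}(A)\in q\}$, so $\lambda_s^{-1}(\widehat{B})=\widehat{\lambda_s^{-1}(B)}$. The right-hand side is open because $\lambda_s:S\to S$ is $(\mathfrak{m},\mathfrak{m})$-measurable: its graph is the $s$-slice of $\pi^{-1}(B)\in\mathfrak{m}\otimes\mathfrak{m}$, and by the slicing theorem (Theorem 1.2 of the Preliminary) this slice lies in $\mathfrak{m}$. For (b), fix $q\in\mathfrak{m}^\beta$; for a basic clopen $\widehat{B}$ one computes $\rho_q^{-1}(\widehat{B})=\widehat{C}$, where $C=\{s\in S:(\pi^{-1}(B))_s\in q\}$. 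The critical and most delicate step is verifying $C\in\mathfrak{m}$. This is the general fact that $\{s\in S:A_s\in q\}\in\mathfrak{m}$ for every $A\in\mathfrak{m}\otimes\mathfrak{m}$ and every $q\in\mathfrak{m}^\beta$, which I would establish by a monotone-class argument: the statement is immediate for measurable rectangles $A=E\times F$ (the set is $E$ or $\emptyset$ depending on whether $F\in q$), and the collection of $A$ for which it holds is easily checked to be closed under complements and countable disjoint unions. This measurability is implicitly needed to make Lemma 4.3 go through, and I expect it to be the main technical obstacle in a fully rigorous write-up.

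Uniqueness follows by a two-step density chase. Property (a) fixes $\ast$ on $e(S)\times e(S)$. Since $P_f(S)\subseteq\mathfrak{m}$ forces $\mathfrak{m}$ to separate $S$, Theorem 2.17(c) gives that $e(S)$ is dense in $\mathfrak{m}^\beta$; combined with (c), this pins down $\hat{s}\ast q$ for every $s\in S$ and every $q\in\mathfrak{m}^\beta$. A second appeal to density of $e(S)$ together with (b) then pins down $p\ast q$ for arbitrary $p\in\mathfrak{m}^\beta$, yielding uniqueness.
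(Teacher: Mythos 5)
Your construction $p\ast q=\pi^\beta(p\otimes q)$ is essentially the paper's: the paper sets $p\ast q=\rho_q(p)$ where $\rho_q$ is the continuous extension of $r_q=\pi^\beta\circ R_q$, and your map $p\mapsto\pi^\beta(p\otimes q)$ agrees with $r_q$ on $e(S)$ and is continuous exactly when the measurability you flag holds, so the two definitions coincide. Your handling of (a), of (c) via the slicing theorem, and of uniqueness by the two-step density-plus-continuity chase all match what the paper does (or leaves to the reader). The difference is that you explicitly isolate the real crux --- that $\{s\in S:A_s\in q\}\in\mathfrak{m}$ for every $A\in\mathfrak{m}\otimes\mathfrak{m}$ and every $q\in\mathfrak{m}^\beta$ --- whereas the paper silently assumes it: Lemma 4.5 verifies $R_q^{-1}(\widehat{A\times B})\in\mathfrak{m}$ only for rectangles and then appeals to ``$\mathfrak{m}\times\mathfrak{m}$ generates $\mathfrak{m}\otimes\mathfrak{m}$'', and the proof of the theorem itself just asserts that $r_q$ is measurable.

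However, your proposed monotone-class argument for that crux does not go through, and this is a genuine gap. Let $\mathcal{D}=\{A\in\mathfrak{m}\otimes\mathfrak{m}:\{s:A_s\in q\}\in\mathfrak{m}\}$. Rectangles, complements, and finite unions are fine, because an $\mathfrak{m}$-ultrafilter is prime for \emph{finite} unions (Theorem 2.7(b)). But for a countable (even disjoint) union $A=\bigcup_{n}A_n$ one has $A_s=\bigcup_n(A_n)_s$ and only the inclusion
\[
\bigcup_{n}\left\{s:(A_n)_s\in q\right\}\subseteq\left\{s:A_s\in q\right\},
\]
which can be strict: it fails exactly at those $s$ for which $A_s\in q$ but no $(A_n)_s\in q$, and such $s$ exist whenever $q$ is not countably complete (any nonprincipal $q$ and a countable partition of a section already shows countable primeness fails). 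So $\mathcal{D}$ is an algebra containing the rectangles, but there is no reason for it to be closed under countable unions, and neither the Dynkin nor the monotone-class theorem can be invoked. Since the set you actually need for continuity of $\rho_q$ is $C=\{s:(\pi^{-1}(B))_s\in q\}=\{s:s^{-1}B\in q\}$ with $\pi^{-1}(B)$ a general element of $\mathfrak{m}\otimes\mathfrak{m}$ rather than a rectangle, condition (b) --- and, upstream, Lemma 4.3 and the measurability of $R_q$ --- remains unestablished. To be fair, this is equally an unacknowledged gap in the paper's own proof; but your write-up would need either a direct argument that $\{s:s^{-1}B\in q\}\in\mathfrak{m}$ for the particular sets $\pi^{-1}(B)$, or an added hypothesis (countable completeness of the ultrafilters, or replacing $\mathfrak{m}\otimes\mathfrak{m}$ by the algebra generated by rectangles) before the monotone-class step can be salvaged.
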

\begin{proof}
Given any $s\in S$, define
$l_s:S\rightarrow S$ by $l_s(t)=st$, then $l_s$ is a measurable function. So there is a unique continuous function $\lambda_s:\mathfrak{m}^\beta\rightarrow \mathfrak{m}^\beta$ such that $\lambda_s(e(t))=l_s(t)$ for each $t\in S$. If $s \in S$ and $q\in \mathfrak{m}^\beta$, we define
$s\ast q=\lambda_s(q)$. Then $(c)$ holds and so does $(a)$, because  $\lambda_s$ extends $l_s$.

Now we extend $\ast$ to the rest of $\mathfrak{m}^\beta\times\mathfrak{m}^\beta$. Given $q\in \mathfrak{m}^\beta$, define
$r_q=\pi^\beta\circ R_q:S\rightarrow\mathfrak{m}^\beta$. The mapping $r_q$ is measurable, so there is a unique continuous extension $\rho_q:\mathfrak{m}^\beta\rightarrow \mathfrak{m}^\beta$. If $p,q\in \mathfrak{m}^\beta$, we define
$p\ast q=\rho_q(p)$. This is the only possible definition
which satisfies the required conditions.
\end{proof}
\begin{theorem}
Let $(S, \cdot, \mathfrak{m})$ be a measurable semigroup. Then the extended operation on $\mathfrak{m}^\beta$ is associative.
\end{theorem}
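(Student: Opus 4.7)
The plan is to mimic the classical three-step density/continuity argument used to show that the Stone-\v{C}ech compactification of a (discrete) semigroup is a right topological semigroup. The core identity to prove is $(p\ast q)\ast r=p\ast(q\ast r)$ for all $p,q,r\in\mathfrak{m}^\beta$, and the only ingredients we may use are the three clauses of the previous theorem (which say that $\ast$ extends $\cdot$, that $\rho_q$ is continuous for every $q\in\mathfrak{m}^\beta$, and that $\lambda_s$ is continuous for every $s\in S$), together with the facts that $e(S)$ is dense in $\mathfrak{m}^\beta$ (Theorem \ref{T4}(c)) and that $\mathfrak{m}^\beta$ is Hausdorff (Theorem \ref{T3}).

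First I would fix $s,t\in S$ and $r\in\mathfrak{m}^\beta$, and show $(s\ast t)\ast r=s\ast(t\ast r)$. The left-hand side equals $\lambda_{s\cdot t}(r)$ and the right-hand side equals $\lambda_s(\lambda_t(r))$; these are two continuous functions of $r$ (compositions of the $\lambda$'s, all of which are continuous since $s,t\in S$). On the dense subset $e(S)$ they agree by associativity in $S$ together with clause (a), so by density and Hausdorffness they agree everywhere.

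Second I would fix $s\in S$ and $q,r\in\mathfrak{m}^\beta$, and show $(s\ast q)\ast r=s\ast(q\ast r)$. Rewriting, $(s\ast q)\ast r=\rho_r(\lambda_s(q))$ and $s\ast(q\ast r)=\lambda_s(\rho_r(q))$; these are two continuous functions of the variable $q$ (each is a composition of a continuous $\lambda_s$ with the continuous $\rho_r$). By the first step they coincide on $e(S)$, so again density plus Hausdorffness gives agreement for all $q\in\mathfrak{m}^\beta$.

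Third, fix $q,r\in\mathfrak{m}^\beta$ and vary $p$. We have $(p\ast q)\ast r=\rho_r(\rho_q(p))$ and $p\ast(q\ast r)=\rho_{q\ast r}(p)$, two continuous functions of $p$. By the second step they agree on $e(S)$, and hence on all of $\mathfrak{m}^\beta$ by density. This yields associativity in full generality. The only potential obstacle is that $\lambda_p$ is in general \emph{not} known to be continuous when $p\notin e(S)$, which is exactly why the steps must be carried out in the order $s,t\in S$ first, then one point promoted to $\mathfrak{m}^\beta$ at a time via the right-continuous $\rho$'s; the argument is otherwise routine.
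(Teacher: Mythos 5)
Your three-step density argument is correct: each step compares two functions that are continuous in the one variable being promoted (using clause (c) for $\lambda_s$ with $s\in S$ and clause (b) for $\rho_q$), they agree on the dense set $e(S)$ by the previous step (or by associativity in $S$), and Hausdorffness of $\mathfrak{m}^\beta$ forces agreement everywhere. The paper dismisses the proof as obvious, but the standard Hindman--Strauss argument it is implicitly invoking is exactly the one you give, including the crucial point that the variables must be promoted in the order you describe because $\lambda_p$ need not be continuous for $p\notin e(S)$.
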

\begin{proof}
The proof is obvious.
\end{proof}
\begin{theorem}
Let $(S, \cdot, \mathfrak{m})$ be a measurable semigroup. Then $(\mathfrak{m}^\beta,\ast)$ is a compact right semigroup.
\end{theorem}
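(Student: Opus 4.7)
The plan is to assemble the conclusion directly from the three preceding results, since each of the three defining properties of a compact right topological semigroup has already been established. First I would invoke Theorem \ref{T3}, which gives compactness (and Hausdorffness) of $\mathfrak{m}^\beta$ with respect to the topology generated by the basis $\{\widehat{A} : A \in \mathfrak{m}\}$.

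Next I would cite the previous theorem (associativity of the extended operation $\ast$) to confirm that $(\mathfrak{m}^\beta,\ast)$ is a semigroup. Together with closure of $\ast$ (immediate from the construction $p\ast q=\rho_q(p)\in\mathfrak{m}^\beta$), this gives the algebraic part.

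Finally, the right-topological condition is exactly property $(b)$ of the construction theorem: for every fixed $q\in\mathfrak{m}^\beta$, the map $\rho_q:\mathfrak{m}^\beta\to\mathfrak{m}^\beta$ defined by $\rho_q(p)=p\ast q$ is continuous. Combining these three facts yields the claim.

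Since all ingredients are already proved, there is no real obstacle; the only step requiring a brief verbal justification is pointing out that the existence/continuity of $\rho_q$ on all of $\mathfrak{m}^\beta$ (not just on the dense image $e(S)$) is precisely what was constructed as the unique continuous extension of $r_q=\pi^\beta\circ R_q$ in the proof of the operation's existence. Thus the proof is a one-paragraph synthesis citing Theorem \ref{T3}, the associativity theorem, and clause $(b)$ of the construction theorem.
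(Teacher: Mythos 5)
Your proposal is correct and matches the intended argument: the paper itself dismisses the proof as obvious, and the evident route is exactly the synthesis you give, combining the compactness and Hausdorffness of $\mathfrak{m}^\beta$ from Theorem \ref{T3}, the associativity of $\ast$ from the preceding theorem, and the continuity of each $\rho_q$ guaranteed by clause $(b)$ of the construction of $\ast$. Your added remark that $\rho_q$ is defined and continuous on all of $\mathfrak{m}^\beta$ (not merely on $e(S)$) because it is the unique continuous extension of $\pi^\beta\circ R_q$ is a worthwhile clarification that the paper leaves implicit.
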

\begin{proof}
The proof is obvious.
\end{proof}

\begin{theorem}
Let $(S, \cdot, \mathfrak{m})$ be a measurable semigroup and $A\in\mathfrak{m}$.

$(a)$ For any $s\in S$ and $q\in \mathfrak{m}^\beta$, $A\in s\cdot q$ if and only if $s^{-1}A\in q$.

$(b)$ For any $p,q\in\mathfrak{m}^\beta$, $A\in p\cdot q$ if and only if $\{s\in S:s^{-1}A\in q\}\in p$.
\end{theorem}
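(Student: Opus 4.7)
The plan is to extract (a) directly from the explicit formula for the extension in Lemma \ref{L3}, and then to derive (b) from (a) by pulling the clopen set $\widehat{A}$ back through the continuous map $\rho_q$ and identifying the result on the dense image $e(S)$.

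For (a), recall from the proof of Theorem 4.6 that $\lambda_s$ is defined as the extension of the measurable map $l_s:S\to S$, $l_s(t)=st$, provided by Lemma \ref{L3}. That lemma gives the explicit description
$$\lambda_s(q)=\{B\in\mathfrak{m}:l_s^{-1}(B)\in q\}.$$
Since $l_s^{-1}(A)=\{t\in S:st\in A\}=s^{-1}A$, statement (a) follows at once: $A\in s\cdot q=\lambda_s(q)$ if and only if $s^{-1}A\in q$.

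For (b), fix $q\in\mathfrak{m}^\beta$ and $A\in\mathfrak{m}$. Since $\rho_q:\mathfrak{m}^\beta\to\mathfrak{m}^\beta$ is continuous and $\widehat{A}$ is clopen, $\rho_q^{-1}(\widehat{A})$ is a clopen subset of $\mathfrak{m}^\beta$, hence of the form $\widehat{B}$ for a unique $B\in\mathfrak{m}$, by the earlier characterization of clopen subsets of $\mathfrak{m}^\beta$. I would identify $B$ by computing $e^{-1}$ in two ways: on the one hand $e^{-1}(\widehat{B})=B$, and on the other hand, using that $\rho_q$ extends $r_q$ and that $r_q(s)=\pi^\beta(s\otimes q)=s\ast q$,
$$e^{-1}\bigl(\rho_q^{-1}(\widehat{A})\bigr)=\{s\in S:s\ast q\in\widehat{A}\}=\{s\in S:A\in s\cdot q\}=\{s\in S:s^{-1}A\in q\},$$
where the final equality is (a). Hence $B=\{s\in S:s^{-1}A\in q\}$, and consequently $A\in p\cdot q$ if and only if $p\in\rho_q^{-1}(\widehat{A})=\widehat{B}$ if and only if $B\in p$, which is exactly (b).

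The only delicate point I anticipate is the tacit measurability of the set $\{s\in S:s^{-1}A\in q\}$; a direct verification from the definition of $p\otimes q$ is possible but awkward. The route above sidesteps this difficulty entirely, since the clopen-equals-$\widehat{\cdot}$ theorem supplies some $B\in\mathfrak{m}$ with $\rho_q^{-1}(\widehat{A})=\widehat{B}$, and the two computations of $e^{-1}$ then force $B$ to equal the set in question. Everything else is pure bookkeeping on the definitions of $\lambda_s$, $\rho_q$, and $\ast$.
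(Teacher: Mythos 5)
Your proof is correct and follows essentially the same route as the paper's: part (a) is read off from the explicit formula for the extension in Lemma \ref{L3}, and part (b) comes from pulling $\widehat{A}$ back through the continuous map $\rho_q$ and identifying the resulting set on $e(S)$. In fact your treatment of (b) is slightly more careful than the paper's, which writes $\rho_q^{-1}(A)\in p$ without justifying that the set $\{s\in S:s^{-1}A\in q\}$ lies in $\mathfrak{m}$ --- a gap you close by invoking the characterization of clopen subsets of $\mathfrak{m}^\beta$ as the sets $\widehat{B}$ with $B\in\mathfrak{m}$.
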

\begin{proof}
$(a)$ Necessity. Let $A\in s\cdot q$. Then $A\in\lambda_s(q)$ and hence by Lemma \ref{L3}, $s^{-1}A=\lambda_s^{-1}(A)\in q$.

Sufficiency. Assume $s^{-1}A\in q$ and suppose that $A\notin s\cdot q$. Then $S\setminus A\in s\cdot q $.
So, by the already established necessity, $s^{-1}(S\setminus A)\in q$. This is a contradiction since
$s^{-1}A\cap s^{-1}(S\setminus A)=\emptyset$.

$(b)$ Necessity. Let $A\in p\cdot q$. Then by Lemma \ref{L3},
\[
\{s\in S:\rho_q(s)\in \widehat{A}\}=\rho_q^{-1}(A)\in p.
\]
So $\{s\in S:A\in s\cdot q\}\in p$. Hence  $\{s\in S:s^{-1}A\in q\}\in p$.

Sufficiency. Let $\{s\in S:s^{-1}A\in q\}\in p$. Since
\begin{align*}
 \{s\in S:s^{-1}A\in q\} &=\{s\in S:A\in s\cdot q\} \\
&=  \{s\in S:A\in \rho_q(s)\}\\
&=\{s\in S:s\in \rho_q^{-1}(A)\}\\
\end{align*}
 $\rho_q^{-1}(A)\in p$. So $A\in p\cdot q$.
\end{proof}
\begin{defn}
Let $(S, \cdot, \mathfrak{m})$ be a measurable semigroup, then $\mathfrak{m}^\beta$ is called measurable semigroup compactification of
$(S, \cdot, \mathfrak{m})$.
\end{defn}
Now we state some algebraic properties of semigroup extension of measurable semigroup $(S, \cdot, \mathfrak{m})$.
\begin{theorem}
Let $(S,\cdot,\mathfrak{m})$ be a measurable semigroup, then $\mathfrak{m}^\beta$ has an idempotent.
\end{theorem}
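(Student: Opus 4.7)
The plan is to apply the Ellis--Numakura lemma, which provides an idempotent in any compact Hausdorff right topological semigroup. Since the preceding theorems establish that $(\mathfrak{m}^\beta, \ast)$ is such a semigroup (compact Hausdorff by Theorem \ref{T3}; right topological because each $\rho_q$ is continuous), the argument will be entirely structural.

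First I would let $\mathcal{E}$ be the collection of all nonempty closed subsemigroups of $\mathfrak{m}^\beta$, partially ordered by reverse inclusion. This family is nonempty because $\mathfrak{m}^\beta \in \mathcal{E}$. A standard Zorn's lemma argument works: given a chain in $\mathcal{E}$, its intersection is nonempty by compactness (the chain has the finite intersection property and consists of closed sets in the compact space $\mathfrak{m}^\beta$), closed as an intersection of closed sets, and a subsemigroup since products of elements chosen from the intersection lie in every member of the chain. Hence $\mathcal{E}$ contains a minimal element $M$.

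Next I would pick any $p \in M$ and form $M \ast p = \rho_p(M)$. Since $M$ is closed in the compact space $\mathfrak{m}^\beta$, it is compact; since $\rho_p$ is continuous and $\mathfrak{m}^\beta$ is Hausdorff, $M \ast p$ is compact and hence closed. It is a subsemigroup of $M$ (if $m_1 \ast p, m_2 \ast p \in M\ast p$, then $(m_1 \ast p)\ast(m_2 \ast p) = (m_1 \ast p \ast m_2)\ast p \in M \ast p$ because $m_1 \ast p \ast m_2 \in M$). By minimality of $M$ we conclude $M \ast p = M$, so in particular there exists $q \in M$ with $q \ast p = p$.

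Finally I would consider the set
\[
T = \{x \in M : x \ast p = p\} = M \cap \rho_p^{-1}(\{p\}).
\]
This is nonempty (contains $q$), closed in $M$ because $\{p\}$ is closed in the Hausdorff space $\mathfrak{m}^\beta$ and $\rho_p$ is continuous, and a subsemigroup by associativity: if $x,y \in T$ then $(x\ast y)\ast p = x\ast(y\ast p) = x\ast p = p$. Minimality of $M$ forces $T = M$, and in particular $p \in T$, yielding $p \ast p = p$. Thus $p$ is the desired idempotent. The only step requiring care is verifying that $M \ast p$ is a subsemigroup of $M$, which is handled purely by associativity together with $M$ being a subsemigroup; no nontrivial obstacle arises because all the topological hypotheses (Hausdorffness, compactness, right topological structure) have already been established in the previous sections.
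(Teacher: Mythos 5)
Your proof is correct and takes essentially the same route as the paper: the paper's entire proof is a citation of Theorem 2.5 in Hindman--Strauss, which is the Ellis--Numakura lemma for compact Hausdorff right topological semigroups, and your argument (minimal closed subsemigroup via Zorn, then $M\ast p=M$, then $T=M$) is precisely the standard proof of that cited result. You have simply written out in full the argument the paper delegates to the reference, relying on the same prerequisites (compactness and Hausdorffness from Theorem \ref{T3}, continuity of each $\rho_q$ and associativity from Section 4).
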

\begin{proof}
See Theorem 2.5 in \cite{hindbook}.
\end{proof}
\begin{exam}
a) Let $\LL$ denote the collection of all Lebesgue measurable subsets od real numbers. Then
$(\LL^\beta,+)$ and $(\LL^\beta,\cdot)$ are measurable semigroup compactification of $(\mathbb{R},\LL)$.

b)Every  vector  space  has  a  Hamel  basis, i.e. a maximal linearly independent subset. Since Real numbers as a vector space on rational numbers has infinite dimension, therefore there exists an uncountable maximal linearly independent $I\subseteq [0,1]$. Now choice a countable subset $A\subseteq I$. Then the generated vector space by $I\setminus A$ is not closed. Define $i(x)=x$ when $x\in A$ and $i(x)=0$ for $x\in I\setminus A$. We denote $i$ as to extension $i$ to $\mathbb{R}$ as a linear transformation. It is obvious that $i$ is Borel measurable but it is not continuous.

Now define $*_i:\mathbb{R}\times \mathbb{R}\rightarrow\mathbb{R}$ by $*_i(x,y)=i(x+y)$ for each $x,y\in \mathbb{R}$.
Then $(\mathbb{R},*_i)$ is measurable semigroup, but it is not semitopological semigroup.

\end{exam}
\begin{theorem}
Let $(S, \cdot, \mathfrak{m})$ be a measurable semigroup. Then $S^*=\mathfrak{m}^\beta\setminus S$ is a
subsemigroup of $\mathfrak{m}^\beta$ if and only
if for any $A\in P_f(S)$ and for any infinite subset $B$ of $\mathfrak{m}$ there exists $F\in P_f(B)$ such
that $\bigcap_{x\in F}x^{-1}A$ is finite.
\end{theorem}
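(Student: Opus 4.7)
My plan is to prove the equivalence by its contrapositive in the forward direction and directly in the reverse direction, adapting the classical Hindman--Strauss argument for $\beta S$ (Theorem 4.28 in their book) to the $\mathfrak{m}$-ultrafilter setting. The key inputs will be Corollary \ref{C1} (to realize finite-intersection families of infinite measurable sets inside a free $\mathfrak{m}$-ultrafilter), the partition-regularity characterization of ultrafilters in Theorem \ref{T2} (to force a finite set inside an ultrafilter to yield a singleton), and the description of membership in $p\ast q$ via $\{s\in S:s^{-1}A\in q\}\in p$ from the preceding theorem. I will read $S\subseteq \mathfrak{m}^\beta$ through the principal embedding $e$ of Theorem \ref{T4}, and I will use that any $\mathfrak{m}$-ultrafilter containing a finite set from $\mathfrak{m}$ must be principal (since $P_f(S)\subseteq\mathfrak{m}$ and Theorem \ref{T2} propagates finite unions down to singletons).

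For the forward direction I will prove the contrapositive. Assume there exist $A\in P_f(S)$ and an infinite $B\subseteq S$ such that $\bigcap_{x\in F}x^{-1}A$ is infinite for every $F\in P_f(B)$. Measurability of $\lambda_x$ gives $x^{-1}A=\lambda_x^{-1}(A)\in\mathfrak{m}$, so the family $\{x^{-1}A:x\in B\}$ lies in $\mathfrak{m}$ with every finite subfamily having infinite intersection; Corollary \ref{C1} produces $q\in\mathfrak{m}^\beta$ containing this family with all members infinite, hence $q\in S^{\ast}$. Similarly, the infinite set $B$ is put inside an ultrafilter $p\in S^{\ast}$. Since $B\subseteq\{s\in S:s^{-1}A\in q\}$ and $B\in p$, the upward-closure of $p$ yields $\{s\in S:s^{-1}A\in q\}\in p$, so by the preceding theorem $A\in p\ast q$. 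But $A$ is finite and every singleton lies in $\mathfrak{m}$, so iterated application of Theorem \ref{T2}(b) forces some $\{a\}\in p\ast q$, making $p\ast q$ principal. Thus $p,q\in S^{\ast}$ with $p\ast q\notin S^{\ast}$, contradicting the hypothesis.

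For the converse, assume the combinatorial condition and take $p,q\in S^{\ast}$; suppose for contradiction $p\ast q=\widehat{a}$ for some $a\in S$. Then $\{a\}\in p\ast q$, so by the preceding theorem the set $C:=\{s\in S:s^{-1}\{a\}\in q\}$ lies in $p$. Because $p\in S^{\ast}$ and $P_f(S)\subseteq\mathfrak{m}$, every member of $p$ is infinite; in particular $C$ is infinite. Applying the hypothesis to $A:=\{a\}$ and $B:=C$, we obtain $F\in P_f(C)$ with $\bigcap_{x\in F}x^{-1}\{a\}$ finite. But each $x\in F$ satisfies $x^{-1}\{a\}\in q$, so the finite intersection belongs to $q$, which then contains a finite set, contradicting $q\in S^{\ast}$.

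The main obstacle I anticipate is the measurability bookkeeping rather than the combinatorics: I must be sure that $x^{-1}A\in\mathfrak{m}$ for $A\in P_f(S)$ (handled by left-measurability $\lambda_x^{-1}(A)\in\mathfrak{m}$ since $A\in\mathfrak{m}$), and that the set $\{s\in S:s^{-1}A\in q\}$ is in $\mathfrak{m}$ so that the membership criterion for $p\ast q$ applies cleanly; this last point was implicitly guaranteed by the statement and proof of the preceding theorem via the extension of $\pi$ through the tensor product $\mathfrak{m}\otimes\mathfrak{m}$. Once these measurability points are secured, both directions reduce to the standard ultrafilter partition-regularity argument.
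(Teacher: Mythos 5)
Your proposal is correct and follows essentially the same route as the paper's own proof: the forward direction via the contrapositive using Corollary \ref{C1} to build $p,q\in S^*$ with $p\ast q$ containing the finite set $A$, and the reverse direction by showing a principal product would force an infinite family $\{x^{-1}\{a\}:x\in B\}$ inside a free ultrafilter, contradicting the combinatorial hypothesis. You are in fact somewhat more careful than the paper on the two points it glosses over (why an ultrafilter containing a finite set must be principal, and the measurability of $x^{-1}A$ and of $\{s:s^{-1}A\in q\}$).
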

\begin{proof}
Necessity. Let $A\in P_f(S)$ and an infinite subset $B\in \mathfrak{m}$ be
given. Suppose that for each $F\in P_f(B)$, $\bigcap_{x\in F}x^{-1}A$ is infinite. Then $\{x^{-1}A:x\in B\}$
has the property that all of its finite intersections are infinite so by Corollary \ref{C1}
we may pick $p\in S^*$ such that $\{x^{-1}A:x\in B\}\subseteq p$. Pick $q\in S^*$ such that $B\in q$.
Then $A\in q\cdot p$ and $A$ is finite so $q\cdot p\in S$, a contradiction.

Sufficiency. Let $p,q\in S^*$ be given and suppose that $q\cdot p=y\in S$, (that is,
precisely, that $q\cdot p$ is the $\mathfrak{m}$-principal ultrafilter generated by y). Let $A=\{y\}$ and let
 $B=\{x\in S:x^{-1}A\in p\}$. Then $B\in p$ while for each $F\in P_f(B)$, one has
$\bigcap_{x\in F}x^{-1}A\in p$ so that $\bigcap_{x\in F}x^{-1}A$ is infinite, a contradiction.
\end{proof}
\begin{defn}
Let $S$ be a measurable semigroup. We say that
S is weakly left cancellative if and only if $\lambda_s^{-1}(\{y\})$ is finite for every $x,y \in S$.
\end{defn}
\begin{theorem}
Let $(S, \cdot, \mathfrak{m})$ be a measurable semigroup. Then $S^*$ is a left ideal of $\mathfrak{m}^\beta$ if and
only if $S$ is weakly left cancellative.
\end{theorem}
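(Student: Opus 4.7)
The plan is to prove both directions by contradiction, and in each case to reduce the statement to the ultrafilter characterization of products established in the preceding theorem (that $A\in p\ast q$ iff $\{s\in S : s^{-1}A\in q\}\in p$, and in particular $A\in x\ast q$ iff $x^{-1}A\in q$). The key observation is that $S^{*}$ failing to absorb on the left, i.e.\ $p\ast q\in S$ for some $q\in S^{*}$, is equivalent to saying that some singleton $\{y\}$ lies in the product ultrafilter $p\ast q$, which by the characterization forces a specific set (depending on $y$) to lie in $q$.

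For necessity (left ideal implies weakly left cancellative), I argue contrapositively. Suppose there exist $x,y\in S$ with $B:=\lambda_x^{-1}(\{y\})$ infinite. Because $\lambda_x$ is $(\mathfrak{m},\mathfrak{m})$-measurable and $\{y\}\in P_f(S)\subseteq\mathfrak{m}$, we have $B\in\mathfrak{m}$. Apply Corollary \ref{C1} to the one-element family $\{B\}$: every finite subfamily has infinite intersection (namely $B$), so there is an $\mathfrak{m}$-ultrafilter $q$ containing $B$ all of whose members are infinite. In particular $q$ contains no singleton, so $q\in S^{*}$. Since $B=x^{-1}\{y\}\in q$, part (a) of the preceding theorem yields $\{y\}\in x\ast q$, i.e.\ $x\ast q=\hat{y}\in S$, contradicting the assumption that $S^{*}$ is a left ideal of $\mathfrak{m}^\beta$.

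For sufficiency, assume $S$ is weakly left cancellative, and let $p\in\mathfrak{m}^\beta$, $q\in S^{*}$ be arbitrary. Suppose toward a contradiction that $p\ast q=\hat{y}$ for some $y\in S$, so that $\{y\}\in p\ast q$. Part (b) of the preceding theorem then gives $C:=\{s\in S : s^{-1}\{y\}\in q\}\in p$, and in particular $C\neq\emptyset$. Pick any $s\in C$; then $s^{-1}\{y\}=\lambda_s^{-1}(\{y\})$ is finite by the weakly left cancellative hypothesis, yet belongs to $q$. Since each singleton is in $\mathfrak{m}$, splitting this finite member of $q$ into singletons and applying Theorem \ref{T2}(b) inductively forces some $\{t\}\in q$, so $q=\hat{t}\in S$, contradicting $q\in S^{*}$.

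The only mildly delicate point is the use of Corollary \ref{C1} in the necessity direction to guarantee that the infinite witness $B$ sits inside a \emph{non-principal} $\mathfrak{m}$-ultrafilter; without that one would only get a principal ultrafilter $\hat{t}$ with $t\in B$, which lies in $S$ rather than $S^{*}$ and yields no contradiction. Everything else is a straightforward translation between the two forms of product membership supplied by the earlier theorem.
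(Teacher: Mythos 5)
Your proof is correct and takes essentially the same route as the paper's: for necessity both arguments place the infinite fibre $\lambda_x^{-1}(\{y\})$ inside a non-principal ultrafilter $q$ and note that $x\ast q$ is then the principal ultrafilter at $y$, and for sufficiency both extract from $\{y\}\in p\ast q$ an element $s$ with $s^{-1}\{y\}\in q$ and contradict $q\in S^{*}$. You simply make explicit two steps the paper glosses over, namely the appeal to Corollary \ref{C1} to obtain a non-principal ultrafilter containing the infinite fibre, and the singleton-splitting argument via Theorem \ref{T2}(b) showing that a non-principal ultrafilter contains no finite (hence no finite-but-nonempty) set.
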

\begin{proof}
Necessity. Let $x,y\in S$ be given, let $A=\lambda_s^{-1}(\{y\})$ and suppose that $A$ is
infinite. Pick $p\in S^*\cap \widehat{A}$. Then $y\cdot p=x$, a contradiction.

Sufficiency. Since $S$ is infinite, $S^*\neq\emptyset$. Let $p\in S^*$, let $q\in \mathfrak{m}^\beta$ and suppose
that $q\cdot p= x\in S$. Then $\{x\}\in q\cdot p$ so $\{y\in S:y^{-1}\{x\}\in p\}\in q$ and is
hence nonempty. So pick $y\in S$ such that $y^{-1}\{x\}\in p$. But $y^{-1}\{x\}=\lambda_y^{-1}(\{x\})$ so
$\lambda_y^{-1}(\{x\})$ is infinite, a contradiction.
\end{proof}
\begin{theorem}
Let $(S, \cdot, \mathfrak{m})$ be a measurable semigroup. The following statements are equivalent:

$(a)$ $S^*$ is a right ideal of $\mathfrak{m}^\beta$.

$(b)$ Given any finite subset $A\in\mathfrak{m}$, any sequence $\langle z_n\rangle_{n=1}^{\infty}$ in $S$, and any one-to-one
sequence $\langle x_n\rangle_{n=1}^{\infty}$ in $S$, there exist $n< m$ in $\mathbb{N}$ such that $x_n\cdot z_m\notin A$.

$(c)$ Given any $a\in S$, any sequence $\langle z_n\rangle_{n=1}^{\infty}$ in $S$, and any one-to-one sequence
$\langle x_n\rangle_{n=1}^{\infty}$ in $S$, there exist $n< m$ in $\mathbb{N}$ such that $x_n\cdot z_m \neq a$.
\end{theorem}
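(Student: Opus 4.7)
The plan is to prove the cycle (b) $\Rightarrow$ (c) $\Rightarrow$ (a) $\Rightarrow$ (b). The first implication is immediate by specializing (b) to $A = \{a\}$, so the real work lies in the other two, both of which I would establish by contrapositive using the ultrafilter product formula from the theorem just before, namely $A \in p \cdot q$ if and only if $\{s \in S : s^{-1}A \in q\} \in p$.

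For (c) $\Rightarrow$ (a), I would assume $S^*$ is not a right ideal and pick $p \in S^*$, $q \in \mathfrak{m}^\beta$ and $a \in S$ with $p \cdot q = \hat{a}$. The product formula applied to $\{a\}$ gives $B = \{s \in S : s^{-1}\{a\} \in q\} \in p$. Since $p$ is non-principal, $B$ is infinite, so I can enumerate a one-to-one sequence $\langle x_n \rangle$ in $B$. Because each $x_n^{-1}\{a\}$ lies in $q$, every initial intersection $\bigcap_{n<m} x_n^{-1}\{a\}$ is nonempty, and I pick $z_m$ inside it (for $m=1$ any element of $S$ works). The resulting sequences satisfy $x_n z_m = a$ for every $n < m$, contradicting (c).

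For (a) $\Rightarrow$ (b), I would assume (b) fails: there are a finite $A \in \mathfrak{m}$, a sequence $\langle z_n \rangle$, and a one-to-one $\langle x_n \rangle$ with $x_n z_m \in A$ whenever $n < m$. Rewriting this as $z_m \in x_n^{-1}A$ for $n < m$ shows that the family $\{x_n^{-1}A : n \in \mathbb{N}\} \subseteq \mathfrak{m}$ has the finite intersection property, so Theorem \ref{T1} supplies an $\mathfrak{m}$-ultrafilter $q$ containing every $x_n^{-1}A$. Since $A = \bigcup_{b \in A}\{b\}$ is a finite measurable union lying in $q$, iterating Theorem \ref{T2}(b) forces $\{b\} \in q$ for some $b \in A$, so $q = \hat{b}$. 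Hence $x_n b \in A$ for every $n$, and the pigeonhole principle on the finite target $A$ produces an infinite $N \subseteq \mathbb{N}$ and a fixed $a \in A$ with $x_n b = a$ for all $n \in N$. Choosing a non-principal ultrafilter $p$ containing the infinite set $\{x_n : n \in N\}$ gives $\{s \in S : sb = a\} \in p$, and the product formula then yields $\{a\} \in p \cdot \hat{b}$, so $p \cdot \hat{b} = \hat{a} \in S$, contradicting (a).

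The main obstacle is the direction (a) $\Rightarrow$ (b): one has to convert a purely combinatorial hypothesis into the existence of a concrete principal product $p \cdot q \in S$. The key leverage is that finiteness of $A$, combined with the standing assumption $P_f(S) \subseteq \mathfrak{m}$, collapses any ultrafilter containing $A$ to a principal one, which in turn reduces the problem to choosing a single $b \in A$ that $q$ concentrates on; a pigeonhole step then isolates the value $a$ needed to witness $p \cdot \hat{b} = \hat{a}$. Once this collapse is recognized, the rest is routine bookkeeping with the ultrafilter-product formula.
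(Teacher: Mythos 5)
The paper itself offers no proof here---it simply cites Theorem 4.32 of Hindman and Strauss---so your attempt has to be judged on its own. Your cycle $(b)\Rightarrow(c)\Rightarrow(a)\Rightarrow(b)$ is the right architecture, and the first two implications are correct: $(b)\Rightarrow(c)$ is indeed just the case $A=\{a\}$ (legitimate since $P_f(S)\subseteq\mathfrak{m}$), and in $(c)\Rightarrow(a)$ your use of the product formula to get $B=\{s\in S:s^{-1}\{a\}\in q\}\in p$, the observation that $B$ is infinite because a non-principal ultrafilter on a $\sigma$-algebra containing all finite sets contains no finite set, and the diagonal choice of $z_m\in\bigcap_{n<m}x_n^{-1}\{a\}$ are all sound.

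The gap is in $(a)\Rightarrow(b)$, at the sentence ``Since $A=\bigcup_{b\in A}\{b\}$ is a finite measurable union lying in $q$.'' You arranged only that $q$ contains every preimage $x_n^{-1}A$; nothing puts $A$ itself into $q$, and the family $\{A\}\cup\{x_n^{-1}A:n\in\mathbb{N}\}$ need not have the finite intersection property. Consequently $q$ need not be principal: in the rectangular band $S=\mathbb{N}\times\mathbb{N}$ with $(a,b)(c,d)=(a,d)$, taking $x_n=(1,n)$, $z_m=(2,1)$, $A=\{(1,1)\}$ gives $x_n^{-1}A=\mathbb{N}\times\{1\}$ for every $n$, so $q$ can be any non-principal ultrafilter on that infinite set, and your subsequent collapse $q=\hat b$ and the pigeonhole step both fail. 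Fortunately the detour is unnecessary: keep your non-principal $p$ with $\{x_n:n\in\mathbb{N}\}\in p$ (a countable set is in $\mathfrak{m}$, and Corollary \ref{C1} supplies such a $p$), note that $\{s\in S:s^{-1}A\in q\}\supseteq\{x_n:n\in\mathbb{N}\}$, hence $A\in p\cdot q$ by the product formula; since $A$ is finite, iterating Theorem \ref{T2}$(b)$ on $A=\bigcup_{b\in A}\{b\}$ \emph{inside the ultrafilter $p\cdot q$} (where $A$ genuinely lies) yields a singleton in $p\cdot q$, so $p\cdot q\in S$ while $p\in S^*$, contradicting $(a)$. In other words, the ``collapse to a principal ultrafilter'' you correctly identified as the key leverage should be applied to the product $p\cdot q$, not to $q$.
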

\begin{proof}
See Theorem 4.32 in \cite{hindbook}.
\end{proof}

\section{\textbf{Applications}}
Recall a $\sigma$-algebra on a topological space $X$ is the Borel $\sigma$-algebra
generated by all open sets; it is denoted by $\B(X)$. A set in $\B(X)$ is called the Borel set in the space X.

A $\sigma$-algebra on a topological space X is generated by all sets of the form
\[
\{x\in X:f(x)>0\}
\]
where $f$ is a real valued continuous function on X, is called Baire
$\sigma$-algebra and denoted by $Ba(X)$. So $A\in Ba(X)$ is called a Baire set. By Corollary 5.3.5 in \cite{Bogachev},
when $X$ is a metric space then
$A\subseteq X$ is Borel set if and only if $A$ is a Baire set.

Let $\LL$ denote the collection of all Lebesgue measurable sets on $S=(0,+\infty)$, and let $\lambda$ be the Lebesgue measure on real numbers.
It is obvious that $\B(S)\subseteq \LL$. For each $p,q\in\LL^\beta$, $p+q$ and $p\cdot q$ are well define, see Theorem 4.10. So $\LL^\beta$ is a multiplicative and additive semigroup.

Now define
\[
\LL_\circ=\{A\in \B:\lambda(A)=0\}
\]
and for $k\in (0,1]$ define
\[
\LL_k=\{A\in \B:\lambda(A)>k\}.
\]
We define $\LL_+=\bigcup_{k>\circ}\LL_k$. It is obvious that $\LL_0$, $\LL_k$ and $\LL_k$ are partition regular,
and also $\LL_0$, $\LL_k$ and $\LL_+$ are invariant under translate, where $A+x$ is
the translate of $A$ by $x\in S$. Therefore the below collections
\[
\mathcal{L}_\circ^\beta=\{p\in \mathcal{L}^\beta:\exists A\in p\,\,\,\,\lambda(A)=0\},
\]
\[
\mathcal{L}_k^\beta=\{p\in \mathcal{L}^\beta:\forall A\in p\,\,\,\,\lambda(A)>k\}
\]
and
\[
\mathcal{L}_+^\beta=\{p\in \mathcal{L}^\beta:\forall A\in p\,\,\,\,\lambda(A)>0\}
\]
are non-empty sets, by Theorem 2.9.

Let $\mathfrak{m}$ be a $\sigma$-algebra on $X$ and let $A\in\mathfrak{m}$, then $\mathfrak{m}_A=\{T\cap A:T\in \mathfrak{m}\}$
 is a $\sigma$-algebra and we can write $\mathfrak{m}_A^\beta\subseteq \mathfrak{m}^\beta$, because
for each $p_A\in\mathfrak{m}_A^\beta$ there exists a unique $p\in \mathfrak{m}$ such that $p_A\subseteq p$.
\begin{lem}
a) $\mathcal{L}_k^\beta$ and $\mathcal{L}_+^\beta$ are left ideals of $(\mathcal{L}^\beta,+)$. Also $\mathcal{L}_+^\beta$ is left ideal of $(\mathcal{L}^\beta,\cdot)$. \\
b) $\mathcal{L}_\circ^\beta$ is a multiplicative and additive subsemigroup of $\mathcal{L}^\beta$.
\end{lem}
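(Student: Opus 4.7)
The plan is to use the concrete characterization of membership in $p+q$ and $p\cdot q$ given by Theorem 4.10, together with the translation and dilation invariance of Lebesgue measure $\lambda$.

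For part (a), let $p\in\mathcal{L}^\beta$ and $q\in\mathcal{L}_k^\beta$, and take any $A\in p+q$. By Theorem 4.10 the set $\{s\in S:A-s\in q\}$ belongs to $p$, hence is nonempty; picking $s$ in it gives $A-s\in q$, whence $\lambda(A-s)>k$, and translation invariance forces $\lambda(A)>k$. Thus $p+q\in\mathcal{L}_k^\beta$. The same argument with $k=0$ handles $\mathcal{L}_+^\beta$ as a left ideal of $(\mathcal{L}^\beta,+)$, and using the scaling identity $\lambda(A/s)=\lambda(A)/s$ for $s>0$ gives the left ideal property of $\mathcal{L}_+^\beta$ inside $(\mathcal{L}^\beta,\cdot)$.

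For part (b), given $p,q\in\mathcal{L}_\circ^\beta$, pick null sets $A\in p$ and $B\in q$, which we may assume to be Borel after outer regularity. I would first verify, via Theorem 4.10, that $A+B\in p+q$: for each $s\in A$ the inclusion $B\subseteq(A+B)-s$ shows $(A+B)-s\in q$, so $\{s:(A+B)-s\in q\}\supseteq A\in p$. The sumset $A+B$ is an analytic set, hence Lebesgue measurable. The analogous argument with $AB$ in place of $A+B$ handles $p\cdot q$.

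The main obstacle is then showing $\lambda(A+B)=0$ (and $\lambda(AB)=0$), since null plus null need not be null in general, as witnessed by $C+C=[0,2]$ for the Cantor set $C$. The strategy is to refine $A$ to a countable null subset $A'\in p$: then $A'+B=\bigcup_{a\in A'}(a+B)$ is a countable union of null translates and is therefore null, and Theorem 4.10 still gives $A'+B\in p+q$. Producing such a countable refinement inside an arbitrary $\mathfrak{m}$-ultrafilter concentrated on a null set is the delicate measure-theoretic point, and the multiplicative case reduces to the additive one via the homeomorphism $\log\colon(0,+\infty)\to\mathbb{R}$.
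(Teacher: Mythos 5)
Your part (a) is correct and is essentially the paper's own argument: take an arbitrary $A$ in the sum whose right-hand factor lies in $\mathcal{L}_k^\beta$ (resp.\ $\mathcal{L}_+^\beta$), use Theorem 4.10 to produce $s$ with $-s+A\in q$ (resp.\ $s^{-1}A\in q$), and invoke translation (resp.\ dilation) invariance of $\lambda$; your remark that dilation only preserves the condition $\lambda>0$, not $\lambda>k$, correctly explains why the multiplicative claim is made only for $\mathcal{L}_+^\beta$.

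Part (b) is where the genuine gap lies, and you have located it precisely but not closed it. The fallback you propose, refining $A\in p$ to a \emph{countable} null set $A'\in p$, is not available in general. Let $C$ be a copy of the Cantor set inside $(0,+\infty)$ and consider the family consisting of $C$ together with all sets $C\setminus D$ with $D$ countable; every finite intersection is $C$ minus a countable set, hence uncountable, so by Theorem \ref{T1} this family extends to an $\LL$-ultrafilter $p$. Then $C\in p$, so $p\in\mathcal{L}_\circ^\beta$, but $p$ contains no countable set (a countable $E\in p$ would be disjoint from $C\setminus(E\cap C)\in p$). For such a $p$ the union $\bigcup_{a\in A'}(a+B)$ cannot be formed over a countable $A'\in p$, and what your argument actually delivers is only that $p+q$ contains the analytic set $A+B$, whose measure may be positive --- exactly the $C+C=[0,2]$ phenomenon you cite. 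The $\log$ reduction of the multiplicative case is fine but inherits the same unresolved additive problem.

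It is worth noting that the paper's own proof of (b) does not get past this obstacle either: it passes to the traces $p_\circ=\{T\cap A:T\in p\}$ and $q_\circ=\{T\cap A:T\in q\}$ on the null set $A=A_p\cup A_q$ and asserts $p_\circ+q_\circ\subseteq p+q$, but the only sets this visibly places in $p+q$ are subsets of $A+A$, again a sum of two null sets which need not be null. (It also begins by choosing $A_p\in p$ and $A_q\in q$ disjoint, which is impossible when $p=q$, a case a subsemigroup proof must cover.) So the missing ingredient --- a reason why the sum of two ultrafilters each containing a null set must again contain a null set --- is supplied by neither argument; your write-up at least names the difficulty honestly rather than passing over it.
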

\begin{proof}
a) Pick $p\in\mathcal{L}_k^\beta$ and $q\in\LL^\beta$. Let $A\in q+p$, so
\[
\{x\in S:-x+A\in p\}\in q.
\]
Since $p\in\mathcal{L}_k^\beta$ so $\lambda(A)=\lambda(-x+A)>k$. This implies $q+p\in\mathcal{L}_k^\beta$.
By similar way, $\mathcal{L}_+^\beta$ is left ideal of $(\mathcal{L}^\beta,+)$.

b) Pick $p,q\in \mathcal{L}_\circ^\beta$, so there exist $A_p\in p$ and $A_q\in q$
such that $A_p\cap A_q=\emptyset$ and $\lambda(A_p)=\lambda(A_q)=0$. Now Let $A=A_p\cup A_q$, and define
$p_\circ=\{T\cap A:T\in p\}$ and $q_\circ=\{T\cap A:T\in q\}$. It is obvious that $p_\circ$ and $q_\circ$
are $\LL$-ultrafilters on $A$, $p_\circ\subseteq p$ and $q_\circ\subseteq q$. It is obvious that
$p_\circ+q_\circ\subseteq p+q$. By similar way, we can show that $\mathcal{L}_\circ^\beta$ is a multiplicative subsemigroup.
\end{proof}
For $x\in S$, we define
$$x^*= \{p\in \LL^\beta:x\in \bigcap_{A\in p}cl_{S}A\}.$$
It is obvious that $x^*=\{p\in \LL^\beta:\forall y>0,\,\,\,(x-y,x+y)\in p\}$, and $\widehat{x}\in x^*$. We say $p\in x^*$ is
a near point to $x$. We define $B(S)= \cup_{x\in S}x^*$  and $\infty^*=\LL^\beta-B(S)$. An element $p\in B(S)$ is called a bounded ultrafilter and $p\in \infty^*$ is called an unbounded ultrafilter. It is obvious that
\[
\infty^*=\{p\in \LL^\beta:\forall x>0,\,\,\,(x,+\infty)\in p\}.
\]
\begin{lem}
For each $x,y\in S$, then\\
a) $x^*$ is a non empty and compact subset of $\LL^\beta$,\\
b) $x^*+y^*\subseteq (x+y)^*$, and \\
c) $x^*y^*\subseteq (xy)^*$.
\end{lem}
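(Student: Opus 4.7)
The plan is to prove (a) by expressing $x^*$ as a countable intersection of clopen sets in $\LL^\beta$, and to prove (b) and (c) by direct verification using the description of $p \ast q$ from Theorem 4.10(b), first for $+$ and then for $\cdot$.

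For (a), first note $\widehat{x} \in x^*$ because the principal ultrafilter at $x$ contains every measurable neighbourhood of $x$, so $x^* \neq \emptyset$. Then write
\[
x^* \;=\; \bigcap_{n \in \mathbb{N}} \widehat{(x - 1/n,\; x + 1/n) \cap S};
\]
each factor on the right is clopen in $\LL^\beta$ by the results on clopen sets from Section 2, so $x^*$ is closed, and combined with compactness of $\LL^\beta$ (Theorem 2.5) this gives compactness.

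For (b), fix $p \in x^*$ and $q \in y^*$. It suffices to check, for every $\epsilon > 0$ smaller than $\min\{x,y\}/2$ (so all intervals below sit inside $S$), that $A = (x+y-\epsilon,\; x+y+\epsilon) \in p + q$. By Theorem 4.10(b) applied to $+$, this reduces to showing $\{s \in S : -s + A \in q\} \in p$. For $|s-x| < \epsilon/2$, the translate $-s + A = (x+y-\epsilon-s,\; x+y+\epsilon-s)$ contains the neighbourhood $(y-\epsilon/2,\; y+\epsilon/2)$ of $y$, which lies in $q$ since $q \in y^*$; upward closure of $q$ yields $-s + A \in q$. Hence $\{s : -s + A \in q\} \supseteq (x-\epsilon/2,\; x+\epsilon/2) \cap S \in p$, so $A \in p+q$ and $p+q \in (x+y)^*$.

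For (c), the same strategy works with multiplication replacing addition. For small $\epsilon > 0$ and $A = (xy-\epsilon,\; xy+\epsilon) \cap S$, Theorem 4.10(b) applied to $\cdot$ reduces the problem to $\{s \in S : s^{-1}A \in q\} \in p$. Since $s^{-1}A = ((xy-\epsilon)/s,\; (xy+\epsilon)/s)$ for $s > 0$, the point $y$ lies strictly inside this interval whenever $|s-x| < \epsilon/y$, so $s^{-1}A$ is an open neighbourhood of $y$ in $S$ and is therefore a member of $q$. Taking $\delta = \min\{x/2,\; \epsilon/y\}$, the set $\{s : s^{-1}A \in q\}$ contains $(x-\delta,\; x+\delta) \cap S$, which is in $p$. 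I expect the only real point of care to be keeping translates and dilates inside $S$ and ensuring they remain Lebesgue measurable; the latter is automatic because they are open intervals, and the former is handled by shrinking $\epsilon$, so the whole argument rests on the same upward-closure observation used in (b).
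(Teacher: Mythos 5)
Your proof is correct, but it is genuinely different from what the paper does: the paper offers no argument at all for this lemma, simply citing Lemma 2.3(i) of the reference on ultrafilters near an idempotent of a semitopological semigroup, whereas you give a direct, self-contained verification inside the $\LL^\beta$ framework of Sections 2--4. Your part (a) correctly uses the equivalence $p\in x^*$ iff $(x-y,x+y)\cap S\in p$ for all $y>0$ to write $x^*=\bigcap_{n}\widehat{(x-1/n,x+1/n)\cap S}$, an intersection of clopen sets (Theorem 2.6) in the compact Hausdorff space $\LL^\beta$ (Theorem 2.5), and your parts (b) and (c) reduce everything via Theorem 4.10(b) to the elementary interval inclusions $-s+A\supseteq (y-\epsilon/2,y+\epsilon/2)$ for $|s-x|<\epsilon/2$ and $y\in s^{-1}A$ for $|s-x|<\epsilon/y$, which are arithmetically right. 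Two small points are worth making explicit: the invocation of upward closure of $q$ and of $p$ requires the sets $-s+A$, $s^{-1}A$ and $\{s\in S:-s+A\in q\}$, $\{s\in S:s^{-1}A\in q\}$ to lie in $\LL$ --- the first two are open intervals intersected with $S$, as you note, and the latter two equal $r_q^{-1}(\widehat{A})$, whose measurability is exactly what Theorem 4.10 presupposes, so this is consistent with your appeal to that theorem; and passing from ``small $\epsilon$'' to ``all $\epsilon$'' uses upward closure once more, which is immediate. The trade-off is clear: the paper's citation is shorter but points to a result proved in a different (semitopological) setting, while your argument costs a page but actually verifies the claim for Lebesgue-measurable ultrafilters on $(0,+\infty)$ with the operations as defined here.
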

\begin{proof}
See Lemma 2.3(i) in \cite{va}.
\end{proof}
By above Lemma, it is obvious that $0^*$ is a compact multiplicative and additive subsemigroup of $\LL^\beta$.
\begin{lem}
$\infty^*$ is a multiplicative and additive subsemigroup of $\LL^\beta$.
\end{lem}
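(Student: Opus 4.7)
The plan is to fix $p,q\in\infty^*$ and, using the characterisation $A\in p*q\iff\{s\in S:s^{-1}A\in q\}\in p$ from Theorem 4.10(b) (in its additive and multiplicative incarnations), show that every tail $(x,+\infty)$ with $x>0$ belongs to both $p+q$ and $p\cdot q$. Since $\infty^*$ is defined by the single filter condition ``$(x,+\infty)\in p$ for every $x>0$'', this will close the proof.

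For the additive operation I would compute $s^{-1}(x,+\infty)=\{t\in S:s+t>x\}$, observing that for every $s\in S=(0,+\infty)$ this preimage \emph{contains} the tail $(x,+\infty)$: if $s\ge x$ it equals $S$, and if $s<x$ it equals $(x-s,+\infty)\supseteq(x,+\infty)$. Since $q\in\infty^*$ gives $(x,+\infty)\in q$, upward closure of the filter $q$ yields $s^{-1}(x,+\infty)\in q$ for \emph{every} $s\in S$. Hence $\{s\in S:s^{-1}(x,+\infty)\in q\}=S\in p$, and Theorem 4.10(b) delivers $(x,+\infty)\in p+q$. As $x>0$ was arbitrary, $p+q\in\infty^*$. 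The multiplicative case is the same argument with the formula $s^{-1}(x,+\infty)=(x/s,+\infty)$: for each $s>0$ the preimage is again a positive tail, so $q\in\infty^*$ gives $(x/s,+\infty)\in q$, whence $\{s:s^{-1}(x,+\infty)\in q\}=S\in p$ and $(x,+\infty)\in p\cdot q$.

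I do not anticipate any real obstacle here: the whole content is the elementary fact that both operations on $S=(0,+\infty)$ pull tails back to (larger) tails, which matches exactly the filter condition that cuts out $\infty^*$. The only mild care required is to remember that the semigroup is $S=(0,+\infty)$, so preimages are to be taken inside $S$, and to invoke the correct instance of Theorem 4.10(b) in each of the two halves.
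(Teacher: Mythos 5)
Your proof is correct, and it takes a genuinely different (and in fact slightly stronger) route than the paper's. The paper argues by contradiction: it supposes $p\ast q$ lies in some $x^*$ with $x\in S$, extracts a bounded interval $(a,b)\in p\ast q$, and then uses Theorem 4.10(b) to produce some $t$ with $t^{-1}(a,b)\in q$, which is again a bounded set and so cannot belong to the ultrafilter $q\in\infty^*$, being disjoint from a tail that $q$ contains. You instead verify the defining filter condition of $\infty^*$ directly, showing that every tail $(x,+\infty)$ lies in $p+q$ and in $p\cdot q$ because both operations pull tails back to supersets of tails; your computations of $s^{-1}(x,+\infty)$ in the two cases are correct. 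The two arguments rest on the same arithmetic fact, but yours buys more: since you only use $S\in p$ (true for \emph{every} ultrafilter) together with $q\in\infty^*$, your argument actually shows that $\infty^*$ is a left ideal of both $(\LL^\beta,+)$ and $(\LL^\beta,\cdot)$, not merely a subsemigroup. The only point worth making explicit is that the paper defines $\infty^*$ as $\LL^\beta\setminus B(S)$ and merely asserts the tail characterization $\infty^*=\{p:\forall x>0,\ (x,+\infty)\in p\}$ as obvious; your proof relies on that equivalence, which is legitimate but should be cited.
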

\begin{proof}
 Let $p,q\in \infty^*$ and let $pq\in x^*$ for some $x\in S$. Therefore $(a,b)\in pq$ for some $a,b\in S$, and so for some $t\in S$,  $t^{-1}(a,b)\in q$ is a contradiction. So $\infty^*$ is a multiplicative subsemigroup. By similar way, $\infty^*$ is an additive semigroup.
\end{proof}
We say that $A\subseteq S$ is meager if and only if $A$ is the countable union of
nowhere dense sets. Now we state a relation between Baire and meager sets.

\begin{defn}
a) A Lebesgue measurable set $A\subseteq S$ is called Baire large at $\infty$ if and only if
for each $x>0$, $A\cap (x,+\infty)$ is not meager.
The collection of all Baire large sets at $\infty$ is denoted by $BL(\infty)$.

b)  A Lebesgue measurable set $A\subseteq S$ is called Baire large at $0$ if and only if for each $x>0$, $A\cap (0,x)$ is not meager. The collection of
all Baire large sets at $0$ is denoted by $BL(0)$.
\end{defn}
\begin{lem}
a) $BL(\infty)$ and $BL(0)$ are partition regular.\\
b) $\B\LL(0)=\{p\in\LL^\beta: \forall A\in p, A\in BL(0)\}$ is left ideal in $(0^*,\cdot)$.\\
c) $\B\LL(\infty)=\{p\in\LL^\beta: \forall A\in p, A\in BL(\infty)\}$ is left ideal in $(\infty^*,\cdot)$.\\
d) $\B\LL(\infty)$ is left ideal in $(\infty^*,+)$.
\end{lem}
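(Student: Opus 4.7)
For part $(a)$, I argue by contrapositive. Suppose $A,B\in\LL$ satisfy $A\cup B\in BL(\infty)$ while $A\notin BL(\infty)$ and $B\notin BL(\infty)$. Then there exist $x_A,x_B>0$ with $A\cap(x_A,\infty)$ and $B\cap(x_B,\infty)$ meager; setting $x=\max(x_A,x_B)$, both $A\cap(x,\infty)$ and $B\cap(x,\infty)$ are meager, so their union $(A\cup B)\cap(x,\infty)$ is meager, contradicting $A\cup B\in BL(\infty)$. Since $BL(\infty)$ is upward closed inside $\LL$, the witness $C$ demanded by the definition of partition regularity may be taken as whichever of $A,B$ lies in $BL(\infty)$. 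The argument for $BL(0)$ is identical, with initial intervals $(0,x)$ in place of tails and $x=\min(x_A,x_B)$.

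For part $(b)$, I first check $\B\LL(0)\subseteq 0^*$: if $p\in\B\LL(0)$ and $\epsilon>0$, the set $[\epsilon,\infty)$ fails to lie in $BL(0)$ because $[\epsilon,\infty)\cap(0,\epsilon)=\emptyset$ is meager; thus $[\epsilon,\infty)\notin p$ and so $(0,\epsilon)\in p$, giving $p\in 0^*$. To establish the ideal property, fix $p\in 0^*$, $q\in\B\LL(0)$, and $A\in p\cdot q$. By Theorem 4.10$(b)$, $B:=\{s\in S:s^{-1}A\in q\}$ belongs to $p$; pick any $s\in B$, so $s^{-1}A\in q$ is Baire large at $0$. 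For every $z>0$, the homeomorphism $t\mapsto st$ of $S$ maps $s^{-1}A\cap(0,z/s)$ onto $A\cap(0,z)$, and since homeomorphisms preserve the meager $\sigma$-ideal, $A\cap(0,z)$ is non-meager. Hence $A\in BL(0)$, so $p\cdot q\in\B\LL(0)$.

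Part $(c)$ is the symmetric statement at $\infty$. The containment $\B\LL(\infty)\subseteq\infty^*$ follows because $(0,\epsilon]\notin BL(\infty)$, forcing $(x,\infty)\in p$ for every $x>0$. The scaling identity $s\cdot(s^{-1}A\cap(y,\infty))=A\cap(sy,\infty)$ transfers non-meagerness from $s^{-1}A\in BL(\infty)$ to $A\cap(z,\infty)$ upon choosing $y=z/s$. For part $(d)$, addition replaces multiplication and I use the additive form of Theorem 4.10$(b)$: $A\in p+q$ produces $s\in S$ with $-s+A\in q\subseteq\B\LL(\infty)$, and the translation $t\mapsto s+t$ is a homeomorphism carrying $(-s+A)\cap(y,\infty)$ onto $A\cap(s+y,\infty)$. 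Setting $y=z-s$ shows $A\cap(z,\infty)$ non-meager whenever $z>s$; for $z\leq s$ the monotonicity $A\cap(z,\infty)\supseteq A\cap(s+1,\infty)$ finishes the argument.

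The only technical subtlety I foresee is the case split $z\leq s$ in $(d)$, since the translation identity directly handles only $z>s$; it is resolved by the monotonicity observation above. All other steps are routine applications of Theorem 4.10, the invariance of the meager $\sigma$-ideal under the homeomorphisms $t\mapsto st$ and $t\mapsto s+t$, and upward closure of $BL(0)$ and $BL(\infty)$ inside $\LL$.
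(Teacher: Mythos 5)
Your proof is correct and follows essentially the same route as the paper: apply Theorem 4.10 to extract an $s$ with $s^{-1}A\in q$ (resp.\ $-s+A\in q$) and then transfer non-meagerness through the homeomorphism $t\mapsto st$ (resp.\ $t\mapsto s+t$). You simply fill in the details the paper leaves implicit --- the contrapositive argument for partition regularity, the containments $\B\LL(0)\subseteq 0^*$ and $\B\LL(\infty)\subseteq\infty^*$, and the case split $z\le s$ in part (d) --- all of which check out.
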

\begin{proof}
a) It is obvious.\\
b) By a) and Theorem 2.9 implies $BL(0)$ is non empty. Now similar Lemma 17.39 in \cite{hindbook}, let $p\in BL(0)$, $q\in 0^*$ and let
$A\in qp$. Pick $x\in (0,1)$ such that $x^{-1}A\in p$. Let $x^{-1}A\cap (0,\epsilon)$ is not meager for some $\epsilon>0$, so
$x(x^{-1}A\cap (0,\epsilon))$ is not meager and $x(x^{-1}A\cap (0,\epsilon))\subseteq A\cap (0,\epsilon)$ .

c) and d). The proof is similar to b).
\end{proof}
\begin{defn}
a)  $A\subseteq (0,+\infty)$ is  called syndetic if and only if there exists some $G\in P_f(0,+\infty)$ such that
\[
(0,+\infty)=\bigcup_{t\in G}-t+A.
\]
b) $A\subseteq (0,+\infty)$ is  called thick if and only if for each $F\in P_f((0,+\infty))$ there
exists  $x\in A$ such that $x+F\subseteq A$.

c) $A\subseteq (0,+\infty)$ is  called piecewise syndetic if and only if $\bigcup_{t\in G}-t+A$ is thick for some $G\in P_f(0,+\infty)$.
\end{defn}
\begin{theorem}
a) Let $p\in\LL^\beta$. Then $p\in K(\LL^\beta,+)$ if and only if $\{x>0:-x+A\in p\}$ is syndetic for each $A\in p$.

b) Let $A\subseteq (0,+\infty)$. Then $K(\LL^\beta,+)\cap \overline{A}\neq\emptyset$ if and only if $A$ is piecewise syndetic.
\end{theorem}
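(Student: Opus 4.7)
Both parts are analogs of Theorems 4.39 and 4.40 in \cite{hindbook}, and I plan to adapt the classical arguments to the compact right topological semigroup $(\LL^\beta,+)$ (Theorem 4.9), with the pivotal tool being the formula $A\in r+p\iff\{x\in S:-x+A\in p\}\in r$ from Theorem 4.10. I abbreviate $B_A=\{x\in S:-x+A\in p\}$ throughout. The engine of part~(a) is the intermediate claim that $p\in K(\LL^\beta,+)$ if and only if $p\in\LL^\beta+q+p$ for every $q\in\LL^\beta$. The ``if'' direction is immediate, since for any $q\in K$ the two-sided ideal property gives $\LL^\beta+q+p\subseteq K$. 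For ``only if'', let $L$ be the minimal left ideal containing $p$; then $L+p$ is a left ideal contained in $L$, so $L+p=L$ by minimality, whence $\LL^\beta+p=L$, and applying this reasoning with $q+p\in L$ in place of $p$ yields $\LL^\beta+q+p=L\ni p$.

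Granted this, the forward direction of (a) proceeds by contradiction: if $B_A$ is not syndetic, the downward-directed family $\{S\setminus\bigcup_{t\in F}(-t+B_A):F\in P_f(S)\}$ has FIP and extends to $q\in\LL^\beta$ with $-x+B_A\notin q$ for every $x\in S$; picking $r$ with $p=r+q+p$ and iterating Theorem 4.10 one computes $A\in r+q+p\iff\{x:-x+B_A\in q\}\in r$, which forces this set to be non-empty and contradicts the choice of $q$. For the converse, if each $B_A$ is syndetic via some finite $G$, then $\bigcap_{t\in G}(S\setminus(-t+B_A))=\emptyset$, so for any $q\in\LL^\beta$ some $t\in G$ satisfies $-t+B_A\in q$; thus $\{x:-x+B_A\in q\}$ is non-empty and extends to an ultrafilter $r$ with $r+q+p\in\overline{A}\cap(\LL^\beta+q+p)$. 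Since $\LL^\beta+q+p=\rho_{q+p}(\LL^\beta)$ is closed, this shows $p\in\LL^\beta+q+p$ for every $q$, so $p\in K$.

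For part~(b), if $A$ is piecewise syndetic via $G$ and $T=\bigcup_{t\in G}(-t+A)$, thickness of $T$ gives that $\{\widehat{-f+T}:f\in S\}$ has FIP (since $y+F\subseteq T$ means $\bigcap_{f\in F}(-f+T)\neq\emptyset$), so a common point $p_0$ exists. Theorem 4.10 then yields $S+p_0\subseteq\overline{T}$, and combining continuity of $\rho_{p_0}$ with density of $e(S)$ gives $\LL^\beta+p_0=\overline{S+p_0}\subseteq\overline{T}$, a left ideal inside $\overline{T}$. A minimal left ideal $L_0\subseteq\LL^\beta+p_0$ lies in $K\cap\overline{T}$; any $p'\in L_0$ has $T\in p'$, and the finiteness of $G$ forces $-t+A\in p'$ for some $t\in G$, so $A\in t+p'\in K\cap\overline{A}$. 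Conversely, given $p\in K\cap\overline{A}$, part~(a) supplies a finite $G$ witnessing the syndeticity of $B_A$; for any $F\in P_f(S)$, choose $t_f\in G$ with $-(f+t_f)+A\in p$ for each $f\in F$, so $\bigcap_{f\in F}(-(f+t_f)+A)\in p$ is non-empty, and any point $y$ in it satisfies $y+F\subseteq T$, proving $T$ thick and $A$ piecewise syndetic.

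The principal obstacle lies in the intermediate characterization, which tacitly invokes the standard theory of minimal left ideals in compact right topological semigroups --- their existence, closedness, and the identity $\LL^\beta+p=L$ whenever $p$ belongs to a minimal left ideal $L$ --- none of which is developed explicitly in the paper. Once that piece is established, the remaining arguments are bookkeeping, with the one genuine pitfall being sign discipline when iteratively unpacking the equivalence $A\in r+q+p\iff\{x:-x+B_A\in q\}\in r$ in the additive convention.
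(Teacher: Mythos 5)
Your proposal is correct and takes exactly the route the paper intends: the paper's entire ``proof'' is the remark that the argument is similar to Theorems 4.39 and 4.40 of \cite{hindbook}, and what you have written is precisely that adaptation, hinging on the characterization $p\in K(\LL^\beta,+)$ iff $p\in\LL^\beta+q+p$ for all $q\in\LL^\beta$ together with Theorem 4.10. The structure theory of minimal left ideals that you flag as tacit is indeed not developed in the paper, but it is standard for compact right topological semigroups and is available from Chapter 2 of \cite{hindbook}, which the paper already cites.
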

\begin{proof}
The proof is similar to the proof of Theorem 4.39 and 4.40 in \cite{hindbook}.
\end{proof}
The following Theorem has been stated as Theorem 1.3 in \cite{Lisan}.
\begin{theorem}
The closure of the minimal ideal $K(\LL^\beta,+)$ is a left ideal of $(\LL^\beta,\cdot)$. In particular there is
a multiplicative idempotent in $cl_{\LL^\beta}K(\LL^\beta)$.
\end{theorem}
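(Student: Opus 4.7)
The plan is to exploit the characterization of $\overline{K(\LL^\beta,+)}$ provided by the preceding theorem: $q \in \overline{K(\LL^\beta,+)}$ if and only if every $A \in q$ is piecewise syndetic with respect to $+$. Indeed, $q \in \overline{K(\LL^\beta,+)}$ iff each basic clopen neighbourhood $\widehat{A}$ of $q$ meets $K(\LL^\beta,+)$, i.e. iff $K(\LL^\beta,+) \cap \overline{A} \neq \emptyset$ for every $A \in q$, which by part (b) of the preceding theorem is exactly the piecewise syndeticity of each $A \in q$. This reformulation is the natural entry point for connecting the additive ideal structure to the multiplicative action.

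The first key step is to verify that piecewise syndeticity with respect to $+$ is invariant under positive multiplicative scaling. If $\bigcup_{t \in G}(-t+A)$ is thick for some $G \in P_f(S)$, then for any $x > 0$ the identity $x(-t+A) = -xt + xA$ gives $\bigcup_{s \in xG}(-s + xA) = x\bigcup_{t \in G}(-t+A)$. Thickness itself transfers under positive scaling: given $F \in P_f(S)$, pick $y$ witnessing thickness for $x^{-1}F$ and multiply by $x$. Hence $xA$ is piecewise syndetic; an analogous argument shows $x^{-1}A$ is piecewise syndetic.

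Next I would show that $\overline{K(\LL^\beta,+)}$ is a left ideal of $(\LL^\beta,\cdot)$. Fix $p \in \LL^\beta$, $q \in \overline{K(\LL^\beta,+)}$, and let $B \in p \cdot q$. By Theorem 4.10(b) the set $\{x \in S : x^{-1}B \in q\}$ belongs to $p$ and is therefore non-empty; picking such an $x$, the member $x^{-1}B \in q$ is piecewise syndetic. Since $x \in S = (0,+\infty)$ the identity $B = x(x^{-1}B)$ holds, and the scaling invariance just established implies $B$ is piecewise syndetic. Thus every member of $p \cdot q$ is piecewise syndetic, so $p \cdot q \in \overline{K(\LL^\beta,+)}$.

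Finally, the multiplicative idempotent comes from Ellis--Numakura (Theorem 2.5 of \cite{hindbook}): $\overline{K(\LL^\beta,+)}$ is closed in the compact space $\LL^\beta$, hence compact, and being a left ideal of the right topological semigroup $(\LL^\beta,\cdot)$ it is in particular a multiplicative subsemigroup on which each $\rho_q$ remains continuous. Thus it is itself a compact right topological semigroup and must contain an idempotent. The only delicate step is the scaling invariance of piecewise syndeticity under positive multiplication, and that reduces to the elementary identity $x(-t+A) = -xt + xA$ together with the corresponding scaling invariance of thick sets.
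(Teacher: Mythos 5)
Your proposal is correct, and it reaches the conclusion by a genuinely different key step than the paper. Both arguments share the same skeleton: take $A\in q\cdot p$ with $p\in cl_{\LL^\beta}K(\LL^\beta,+)$, use Theorem 4.10(b) to find $x>0$ with $x^{-1}A\in p$, note that $x^{-1}A$ is piecewise syndetic by Theorem 5.8(b), and then argue that $A$ itself must be piecewise syndetic. The difference lies in how that last transfer is made. The paper passes through the algebra of $\LL^\beta$: multiplication by $x$ is an additive automorphism $l_x$ of $(0,+\infty)$, its continuous extension $l_x^\beta$ is an automorphism of $(\LL^\beta,+)$, hence $xK(\LL^\beta,+)=K(\LL^\beta,+)$; picking $\eta\in\widehat{x^{-1}A}\cap K(\LL^\beta,+)$ gives $A\in x\eta\in K(\LL^\beta,+)$ and one applies Theorem 5.8 again. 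You instead prove directly, at the combinatorial level, that piecewise syndeticity is invariant under positive scaling, via the identity $x(-t+A)=-xt+xA$ and the scaling invariance of thickness, after first recasting $cl_{\LL^\beta}K(\LL^\beta,+)$ as the set of ultrafilters all of whose members are piecewise syndetic. Your route is more elementary and self-contained (it needs neither the extension $l_x^\beta$ nor the fact that automorphisms preserve the smallest ideal, a fact the paper uses without citation), at the cost of a short combinatorial verification; the paper's route is shorter on the page but leans on more machinery. Your closing step --- $cl_{\LL^\beta}K(\LL^\beta,+)$ is a compact left ideal, hence a compact right topological subsemigroup of $(\LL^\beta,\cdot)$, hence contains an idempotent by Ellis--Numakura --- is the standard argument and fills in a step the paper leaves implicit.
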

\begin{proof}
Let $p\in cl_{\LL^\beta}K(\LL^\beta,+)$ and $q\in\LL^\beta$. We show that $qp\in cl_{\LL^\beta}K(\LL^\beta,+)$. So let $A\in qp$.
Then $\{x>0:x^{-1}A\in p\}\in q$. Pick $x>0$ such that $x^{-1}A\in p$. Since $p\in cl_{\LL^\beta}K(\LL^\beta)$, so $x^{-1}A$ is piecewise syndetic.
Since $\overline{x^{-1}A}$ is a neighborhood of $p$, so there exists an $\eta\in\overline{x^{-1}A}\cap K(\LL^\beta,+)$. This implies that
$x^{-1}A\in\eta$, and hence $A\in x\eta$. Now Let $l_x:(0,\infty)\rightarrow (0,+\infty)$ is defined by $l_x(y)=xy$. It is obvious that $l_x$ is an additive isomorphism.  So $l_x$ has a unique continuous isomorphism extension $l^\beta_x:\LL^\beta\rightarrow\LL^\beta$. Therefore $xK(\LL^\beta,+)=K(\LL^\beta,+)$ and so $A\in x\eta$ is piecewise syndetic. This implies that $\overline{A}\cap K(\LL^\beta,+)\neq\emptyset$ for each $A\in qp$. So $qp\in cl_{\LL^\beta}K(\LL^\beta,+)$.
\end{proof}
\bibliographystyle{alpha}

\end{document}